\renewcommand{\arraystretch}{1.5}
\newtheorem{theorem}{Theorem}[section]
\newtheorem{lemma}[theorem]{Lemma}
\newtheorem{example}[theorem]{Example}
\newtheorem{corollary}[theorem]{Corollary}
\newtheorem{remark}{Remark}[section]
\title{Notes on Various Methods for Constructing Directed Strongly Regular Graphs}
\author{
Jerod Michel\thanks{Corresponding author. Email Address: contextolibre@gmail.com.}
\thanks{J. Michel is with the Department of Mathematics, Zhejiang University, Hangzhou 310027, China.}
and Baokun Ding\thanks{B. Ding is with the Department of Mathematics, Zhejiang University, Hangzhou 310027, China.} \\
}
\begin{document}

\date{}\maketitle

\begin{abstract}
Duval, in ``A Directed Graph Version of Strongly Regular Graphs'' [{\it Journal of Combinatorial Theory}, Series A 47 (1988) 71 - 100], introduced the concept of directed strongly regular graphs. In this paper we construct several rich families of directed strongly regular graphs with new parameters. Our constructions yielding new parameters are based on extending known explicit constructions to cover more parameter sets. We also explore some of the links between Cayley graphs, block matrices and directed strongly regular graphs with certain parameters. Directed strongly regular graphs which are also Cayley graphs are interesting due to their having more algebraic structure. We construct directed strongly regular Cayley graphs with parameters $((m+1)s,ls,ld,ld-d,ld)$ where $d,l$ and $s$ are integers with $dm=ls$ and $1\leq l<m$. We also give a new block matrix characterization for directed strongly regular graphs with parameters $(m(dm+1),dm,m,m-1,m)$, which were first dicussed by Duval et al. in ``Semidirect Product Constructions of Directed Strongly Regular Graphs'' [{\it Journal of Combinatorial Theory}, Series A 104 (2003) 157 - 167].

\medskip
\noindent {{\it Key words and phrases\/}:
directed strongly regular graph, Cayley graph, block matrix, explicit construction
}\\
\smallskip

\end{abstract}


\section{Introduction}\label{sec1}
R. C. Bose, in the early 1960s, introduced the concept of strongly regular graphs \cite{BOSE}. A. M. Duval, in 1988, introduced the concept of directed strongly regular graphs \cite{DUV2}. Development of the theory of strongly regular graphs was motivated by the study of finite permutation groups, classification of finite simple groups, as well as association schemes. Strongly regular graphs arise from such geometric and combinatorial objects as finite fields, finite geometries, combinatorial designs, and algebraic codes \cite{BOSE}, \cite{CAL} and \cite{FX}. The sources of directed strongly regular graphs have been reported on in \cite{DUV2} and \cite{DUV1}, and, more recently, in \cite{AGOS}, \cite{BOS}, \cite{SG}, \cite{OLMEZ} and \cite{OS}. Duval, in \cite{DUV2}, provided an initial set of parameter restrictions for directed strongly regular graphs, and a complete list of feasible parameters can be found in \cite{BROU}.
\par
This paper investigates various methods for constructing directed strongly regular graphs. We discuss explicit constructions, constructions via Cayley graphs, and by using block matrices as well. Section 2 introduces necessary notation and defines strongly regular graphs, directed strongly regular graphs, and certain necessary types of algebraic structures. Section 3 describes new explicit constructions obtained by extending existing construction methods to cover more parameters sets. Section 4 describes a new construction method for directed strongly regular Cayley graphs with parameters $((m+1)s,ls,ld,ld-d,ld)$ where $d,l$ and $s$ are integers with $dm=ls$ and $1\leq l<m$ \cite{OS}. Section 5 gives a new characterization for those directed strongly regular graphs with parameters $(m(dm+1),dm,m,m-1,m)$ \cite{DUV1}. Section 6 concludes the paper and discusses directions for further work.

\section{Preliminaries}\label{sec2}

We will assume some familiarity with group theory. All of the constructions in this correspondence are done over groups. Since we will make reference to loops more than once, we include its definition here.
\par
A {\it loop} \cite{MOOR} is a set $L$ with a binary operation `*' such that \begin{enumerate}
\item there exists a two-sided identity $Id \in L$ where $Id * x = x * Id = x$ for all $x \in L$, and
\item for all $a \in L$ the left-multiplication map $x \mapsto a * x$ is bijective; also the right-multiplication map $x \mapsto x * a$ is bijective.\end{enumerate} The Cayley table of a finite loop with members $1,...,n$ and identity $1$, is simply an $n \times n$ table with first row and column $(1,...,n)$ (in that order), and where every row and column must contain every element exactly once. Note that an associative loop is the same thing as a group. Unless it is clear by convention, we will denote the identity of a group $G$ by $Id_{G}$, or simply by $Id$ if $G$ is the only group in question.

\subsection{Strongly Regular Graphs and Digraphs}

Only finite simple graphs will be considered in this paper, i.e., only graphs with no loops, or multiple edges. Denote the set of vertices resp. edges of a directed graph $\Gamma$ by $V(\Gamma)$ resp. $E(\Gamma)$. For two vertices $x,y \in V(\Gamma)$, we say $x$ is {\it adjacent} to $y$, written $x \rightarrow y$, if there is an edge from $x$ to $y$. If there is also an edge from $y$ to $x$ then we say that there is an {\it undirected edge} between $x$ and $y$, denoted $x \leftrightarrow y$. If neither $x \rightarrow y$ nor $x \leftrightarrow y$ holds, then we say $x$ is not adjacent to $y$, and write $x \nrightarrow y$.
\par
Now let $\Gamma$ be a graph with vertex set $V(\Gamma)=\{v_{i}\}_{i=1}^{n}$. The adjacency matrix $A = A(\Gamma)$ of $\Gamma$ is the $n \times n$ matrix with rows and columns indexed by $V(\Gamma)$ and \[ A_{ij} = \begin{cases} 1 \text{ if } v_{i} \text{ is adjacent to } v_{j}, \\
                                                                                  0 \text{ otherwise }. \end{cases} \]
A {\it strongly regular graph} \cite{BOSE} with parameters $(v,k,\lambda,\mu)$ is an undirected graph with $v$ vertices with adjacency matrix $A$ satisfying \[ A^{2}=kI+\lambda A + \mu (J-I-A), \] and \[ AJ=JA=kJ,\] where I and J are the identity and all-ones matrix respectively. The equations implies that the number of paths of length two from a vertex $x$ to another vertex $y$ is $\lambda$ if $x$ and $y$ are adjacent, $\mu$ if not. The second equation implies that the number of neighbors, or {\it valency}, of any vertex is $k$.
\par
The concept of `strong regularity' generalizes to directed graphs in the following way. A directed graph $\Gamma$ is {\it directed strongly regular} \cite{DUV2} with parameters $(v,k,s;\alpha,\beta)$ if it has adjacency matrix $A$ satisfying \[ A^{2}=tI+\alpha A + \beta (J-I-A), \] and \[ AJ=JA=kJ.\] From these equations we see that each vertex has $k$ in-neighbors and $k$ out-neighbors, $s$ of which are both in- and out-neighbors. For two distinct vertices $x$ and $y$, the number of directed paths of length two from $x$ to $y$ is $\alpha$ if $x \rightarrow y$ and $\beta$ if $x \nrightarrow y$.

\subsection{Cyclotomic Classes and Cyclotomic Numbers}
Cyclotomic classes have proven to be a powerful tool for constructing difference sets, designs, and graphs e.g. see \cite{DHM}, \cite{DING}, \cite{DUV2}, \cite{NOW}. Let $q$ be a prime power, $\mathbb{F}_{q}$ a finite field, and $e$ a divisor of $q-1$. For a primitive element $\alpha$ of $\mathbb{F}_{q}$ let $D_{0}^{e}$ denote $\langle \alpha^{e} \rangle$, the multiplicative group generated by $\alpha^{e}$, and let \[ D_{i}^{e} = \alpha^{i}D_{0}^{e}, \text{ for } i=1,2,...,e-1. \] We call $D_{i}^{e}$ the {\it cyclotomic classes} of order $e$. The {\it cyclotomic numbers} of order $e$ are defined to be \[ (i,j)_{e} = \left| D_{i}^{e} \cap (D_{j}^{e} + 1) \right|. \]

It is easy to see there are at most $e^{2}$ different cyclotomic numbers of order $e$. When it is clear from the context, we simply denote $(i,j)_{e}$ by $(i,j)$.
The cyclotomic numbers $(h,k)$ of order $e$ have the following properties (\cite{D}):

\begin{eqnarray}\label{eq7}
(h,k) & = & (e-h,k-h), \\
(h,k) & = & \begin{cases}
(k,h),                         & \text{if } f \text{ even},\\
(k+\frac{e}{2},h+\frac{e}{2}), & \text{if } f \text{ odd}.
\end{cases}
\end{eqnarray}

\section{ Explicit Constructions of Directed Strongly Regular Graphs from Products of Abelian Groups}
In this section we give an explicit construction which was motivated by Stefan Gyurki's monograph ``New Rich Families of Directed Strongly Regular Graphs'' \cite{SG}, in which it was discussed how to construct directed strongly regular graphs with parameters $(2n^{2},3n-2,2n-1,n-1,3),(3n^{2},4n-2,2n,n,4),\\(2n^{2},4n-2,2n+2,n+2,6)$ and $(3n^{2},6n-2,2n+6,n+6,10)$ from certain loops and quasigroups. We give a construction of directed strongly regular graphs that have parameters \\$(mn^{2},mn+n-2,2n+m-3,n+m-3,m+1)$, where $n\geq 2$ is an integer and $m\geq 3$ is an odd integer. The proof is tedious but not difficult, so we eliminate similar cases.
\begin{theorem}\label{th3.0} (Construction I) Let $G$ be a multiplicative group of order $n\geq 2$. Let $\Gamma$ be the digraph with vertex set $\mathcal{V}=G\times G\times \mathbb{Z}_{m}$ where $m\geq 3$ is odd, and adjacency given by $(x,y,i)\rightarrow (u,v,j)$ if and only if one of the following hold:\begin{enumerate}
\item $x=u,i=j$ and $y\ne v$,
\item $y=v,i=j$ and $x\ne u$,
\item $u=xy$ and $j\in \{i+1,...,i+\frac{m-1}{2}\}$,
\item $v=xy$ and $j\in \{i-\frac{m-1}{2},...,i-1\}$.\end{enumerate} Then $\Gamma$ is a directed strongly regular graph with parameters $(v,k,t,\lambda,\mu)=(mn^{2},mn+n-2,2n+m-3,n+m-3,m+1)$.
\end{theorem}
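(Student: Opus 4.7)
The plan is to verify the directed strongly regular identity $A^{2}=tI+\alpha A+\beta(J-I-A)$ together with $AJ=JA=kJ$ by explicit counting. The vertex count $|\mathcal{V}|=mn^{2}$ is immediate, so the first task is to show that the out-degree and in-degree both equal $k=mn+n-2$, and the main task is then to count, for each ordered pair of vertices, the number of directed walks of length two between them.

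I would begin by noting that the four adjacency rules are pairwise mutually exclusive: rules 1 and 2 require $j=i$ while rules 3 and 4 require $j\ne i$; rule 1 has $u=x$ and $v\ne y$ while rule 2 has $v=y$ and $u\ne x$; and since $m$ is odd, the half-intervals $\{1,\dots,\frac{m-1}{2}\}$ and $\{-\frac{m-1}{2},\dots,-1\}$ controlling the index increments in rules 3 and 4 are disjoint and together with $\{0\}$ partition $\mathbb{Z}_{m}$. Counting the out-neighbors of $(x,y,i)$ from each rule gives $(n-1)+(n-1)+n\cdot\frac{m-1}{2}+n\cdot\frac{m-1}{2}=mn+n-2$. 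Reversing each rule (for instance, rule 3 traversed backwards forces $b=a^{-1}x$ and $h\in\{i-\frac{m-1}{2},\dots,i-1\}$) gives the same total for the in-degree, so $AJ=JA=kJ$.

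The heart of the proof is computing, for $P=(x,y,i)$ and $Q=(u,v,j)$, the number $N(P,Q)$ of intermediates $R=(a,b,h)$ with $P\to R\to Q$. I would index the count by the ordered pair $(r_{1},r_{2})\in\{1,2,3,4\}^{2}$ of rules used at the two steps. Most of the sixteen combinations vanish: combinations with one of $r_{1},r_{2}$ in $\{1,2\}$ and the other in $\{3,4\}$ force $h=i$ (or $h=j$) simultaneously with a nonzero increment required to lie in a half-interval, which is impossible; and $(3,3),(4,4)$ force $j-i$ to be the sum of two elements from the same half-interval, again incompatible with the prescribed ranges outside degenerate coincidences. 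The surviving combinations are $(1,1),(2,2),(1,2),(2,1),(3,4),(4,3)$, in each of which either a single free group coordinate survives (contributing a count linear in $n$) or a free choice of $h$ in a half-interval survives (contributing $\frac{m-1}{2}$).

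Finally I would specialize to the three standard situations. For $P=Q$ only $(1,1),(2,2),(3,4),(4,3)$ contribute, giving $(n-1)+(n-1)+\frac{m-1}{2}+\frac{m-1}{2}=2n+m-3=t$. For $P\to Q$ I would split into four subcases according to which rule produces the direct edge and in each subcase rerun the sixteen-way analysis, the total in every subcase coming out to $n+m-3=\alpha$. For $P\ne Q$ with $P\not\to Q$ the same procedure yields $m+1=\beta$. The chief obstacle is the bookkeeping in these $\alpha$ and $\beta$ cases, particularly tracking when an intermediate coordinate such as $a$ or $b$ is pinned by an equation like $a=xy$ or $ab=v$ versus freely chosen, and handling boundary coincidences where a coordinate equality makes one adjacency rule collapse into another. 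Once the tabulation is complete in all three situations, the DSRG identity $A^{2}=tI+\alpha A+\beta(J-I-A)$ follows directly, and the proof concludes.
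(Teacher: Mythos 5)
Your overall strategy (verify $AJ=JA=kJ$ and count directed two-paths case by case, organized by which of the four adjacency rules is used at each step) is exactly the paper's approach, and your $v$, $k$ and $t$ counts agree with it. However, there is a concrete error in the structural claim that drives your $\lambda$ and $\mu$ computations: it is false that the only surviving rule-combinations are $(1,1),(2,2),(1,2),(2,1),(3,4),(4,3)$. A combination such as $(1,3)$ does not force a contradiction: rule $1$ at the first step pins the intermediate index at $h=i$, and rule $3$ at the second step then only requires $j-h=j-i$ to lie in the upper half-interval, which is exactly the situation in the $\mu$ case $j=i+\alpha$, $u\ne xy$ (the paper's count of $\mu=m+1$ begins with precisely two such intermediates at level $l=i$, one via rule $1$ and one via rule $2$). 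Likewise $(3,3)$ and $(4,4)$ do not vanish: a sum of two increments from $\{1,\dots,\frac{m-1}{2}\}$ ranges over $\{2,\dots,m-1\}$ mod $m$ and so lands in either half-interval; the paper's $\lambda$ case $3$ and its $\mu$ count rely on these combinations (e.g.\ intermediates $(xy,w,i+\beta)$ with $\beta<\alpha$, where the second step is again rule $3$ and forces $w=u(xy)^{-1}$ or $w=1$).

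This is not a cosmetic slip: if you genuinely restricted to your six combinations, the $\mu$ count for $(x,y,i)\nrightarrow(u,v,i+\alpha)$ would reduce to the $(3,4)$ and $(4,3)$ contributions, which total $m-1-2\alpha$ — an $\alpha$-dependent quantity, not the constant $m+1$, so the DSRG identity would not even be verifiable this way. The fix is to drop the blanket vanishing claim and, as the paper does, classify intermediates $(z,w,l)$ by their level $l$ relative to $i$ and $j$ ($l=i$; $l=i+\beta$ with $\beta$ before, equal to, or after $\alpha$; $l=i-\beta$), determining in each band which rule applies at each step and checking that the resulting equation for $z$ or $w$ has exactly the claimed number of solutions (including the degenerate coincidences you deferred, such as $w=y$ or $z=x$ being excluded). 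With that corrected bookkeeping the totals $n+m-3$ and $m+1$ do come out independent of $\alpha$, matching the paper.
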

\begin{proof} Clearly $v=mn^{2}$.
\\[1\baselineskip]
$\underline{k}$: Let $(x,y,i)\in \mathcal{V}$. There are $n-1$ vertices $(z,y,i)\in\mathcal{V}$ with $z\ne x$, and $n-1$ vertices $(x,z,i)\in \mathcal{V}$ with $z\ne y$. Also, there are $n$ vertices $(xy,z,i+\alpha)\in \mathcal{V}$ where $\alpha$ is fixed in $\{1,...,\frac{m-1}{2}\}\subset \mathbb{Z}_{m}$, and $n$ vertices $(z,xy,i)\in \mathcal{V}$ where $\alpha$ is fixed in $\{1,...,\frac{m-1}{2}\}$. Thus $k=2(n-1)+n(m-1)=mn+n-2$.
\\[1\baselineskip]
 $\underline{t}$: We need to count the number of vertices $(u,v,j)\in \mathcal{V}$ such that \[(x,y,i)\leftrightarrow (u,v,j).\] If $x=u$ and $i=j$ but $y\ne v$ then the property holds for $(u,v,j)$, and if $y=v$ and $i=j$ but $x\ne u$ then the property holds for $(u,v,j)$. This gives $2(n-1)$ vertices. Suppose that $(x,y,i)\leftrightarrow (u,v,j)=(xy,z,i+\alpha)$ where $\alpha$ is fixed in $\{1,...,\frac{m-1}{2}\}$. Then we must have $x=zxy$ so that $z=x^{-1}$. Thus there is one such vertex with this property. Now suppose that \[(x,y,i)\leftrightarrow (u,v,j)=(z,xy,i+\alpha)\] where $\alpha$ is fixed in $\{1,...,\frac{m-1}{2}\}$. Then we must have $y=zxy$ so that $z=y^{-1}$. There is one such vertex with this property. Counting all such vertices (and letting $\alpha$ run over $\{1,...,\frac{m-1}{2}\}$), we get $t=2(n-1)+m-1=2n+m-3$.
\\[1\baselineskip]
$\underline{\lambda}$: Now, given two vertices $(x,y,i),(u,v,j)\in \mathcal{V}$ such that $(x,y,i)\rightarrow(u,v,j)$, we need to count the number of vertices $(z,w,l)\in \mathcal{V}$ such that \[(x,y,i)\rightarrow(z,w,l)\text{ and }(z,w,l)\rightarrow(u,v,j).\] The four possibilities for such a vertex $(u,v,j)$ are given in the statement of the theorem. We first consider case $1$, where $u=x$ and $i=j$ but $y\ne v$, which is similar to case $2$; and then consider case $3$, where $u=xy$ and $j\in \{j+1,...,i+\frac{m-1}{2}\}$, which is similar to case $4$. So, assuming that $u=x$ and $i=j$ but $y\ne v$, there are $n-2$ vertices $(z,w,l)\in \mathcal{V}$ such that \[(x,y,i)\rightarrow(z,w,l)=(x,w,i)\] since there are $n-2$ points $w\in G$ such that $y\ne w\ne v$. There are $\frac{m-1}{2}$ vertices $(z,w,l)\in \mathcal{V}$ were $l=i+\alpha$ for some $\alpha\in\{1,...,\frac{m-1}{2}\}$. If \[(x,y,i)\rightarrow(z,w,l)=(z,w,i+\alpha)\] then we must have $z=xy$, and if \[(z,w,l)=(z,w,i+\alpha)\rightarrow(u,v,j)\] then we must have $v=xyw$ so that $w=v(xy)^{-1}$. Also, there are $\frac{m-1}{2}$ vertices $(z,w,l)\in \mathcal{V}$ where $l=i-\alpha$ for some $\alpha\in\{1,...,\frac{m-1}{2}\}$. If $(x,y,i)\rightarrow(z,w,l)=(z,w,i-\alpha)$ then we must have $w=xy$, and if $(z,w,l)=(z,w,i-\alpha)\rightarrow(u,v,j)$ then we must have $x=u=xyw$ so that $w=y^{-1}$. Counting all of the above vertices we have $n-2+m-1=n+m-3$ in total.

 We now consider case $3$, where $u=xy$ and $j\in \{j+1,...,i+\frac{m-1}{2}\}$, i.e., we have \[(x,y,i)\rightarrow(u,v,j)=(xy,v,i+\alpha).\] There are $m$ possibilities for a vertex $(z,w,l)\in \mathcal{V}$ such that $(x,y,i)\rightarrow(z,w,l)$. One case is when $l=i$, $\frac{m-1}{2}$ cases are when $l=i+\beta$ for some $\beta \in \{1,...,\frac{m-1}{2}\}$, and $\frac{m-1}{2}$ cases are when $l=i-\beta$ for some $\beta\in \{1,...,\frac{m-1}{2}\}$. If $l=i$, there are $n-1$ vertices $(z,w,l)\in \mathcal{V}$ with $w=y$ and $z=x$. But then if $(z,w,l)\rightarrow(xy,v,i+\alpha)$ we must have $xy=yz$ so that $z=x$, a contradiction. Also there are $n-1$ vertices $(z,w,l)\in \mathcal{V}$ with $w\ne y$ and $z=x$. But then if $(z,w,l)\rightarrow(xy,v,i+\alpha)$ we must have $xy=zw$ so that $w=y$, a contradiction. Thus, we cannot have $l=i$. Then suppose $l=i+\alpha$ (i.e. $\beta=\alpha$). If $(x,y,i)\rightarrow(z,w,l)$ then we must have $z=xy$, and if $(z,w,l)\rightarrow(xy,v,i+\alpha)$ we must have $w\ne v$. There are $n-1$ such vertices. Now suppose $l=i+\beta$ where $\beta$ occurs before $\alpha$ in the natural ordering of $\{1,...,\frac{m-1}{2}\}$. If $(x,y,i)\rightarrow(z,w,l)$ then again we must have $z=xy$, and if $(z,w,l)\rightarrow(xy,v,i+\alpha)$ then we must have $xy=zw=xyw$ so that $w=1$. There is one such vertex per choice of $\beta$. If we suppose that $l=i+\beta$ where $\beta$ occurs after $\alpha$ in the natural ordering of $\{1,...,\frac{m-1}{2}\}$, then \[(x,y,i)\rightarrow(z,w,l)\] implies that $z=xy$, and \[(z,w,l)\rightarrow(xy,v,i+\alpha)\] implies that $v=xyw$ so that $w=v(xy)^{-1}$. There is one such vertex per choice of $\beta$. Lastly, we suppose that $l=i-\beta$ for some $\beta \in \{1,...,\frac{m-1}{2}\}$. If $(x,y,i)\rightarrow(z,w,l)$ then we must have $w=xy$. If $(z,w,l)\rightarrow(xy,v,i+\alpha)$ then we must have $v=xyw$ so that $w=v(xy)^{-1}$ whenever $\alpha+\beta$ occurs in $\{\frac{m-1}{2}+1,...,m-1\}$, and we must have $xy=zw=zxy$ so that $z=1$ whenever $\alpha+\beta$ occurs in $\{1,..,\frac{m-1}{2}\}$. Either way, there is one such vertex per choice of $\beta$. Counting over all such vertices we have $n-1+m-2=n+m-3$ in total. Thus we can conclude that $\lambda=n+m-3$.
\\[1\baselineskip]
$\underline{\mu}$: Now, given two vertices $(x,y,i),(u,v,j)\in \mathcal{V}$ such that \[(x,y,i)\nrightarrow(u,v,j),\] we need to count the number of vertices $(z,w,l)\in \mathcal{V}$ such that \[(x,y,i)\rightarrow(z,w,l)\text{ and }(z,w,l)\rightarrow(u,v,j).\] There are only two possibilities for $(u,v,j)$. One case is when $u\ne xy$ and $j=i+\alpha$ for some $\alpha\in \{1,...,\frac{m-1}{2}\}$, and one case is when $v\ne xy$ and $j=i-\alpha$ for some $\alpha\in\{1,...,\frac{m-1}{2}\}$. We show only the first case, as the second case is similar. So we have $(x,y,i)\nrightarrow (u,v,i+\alpha)$ for some $\alpha\in\{1,...,\frac{m-1}{2}\}$ and $u\ne xy$. Again there are $m$ possibilities for the vertex $(z,w,l)\in\mathcal{V}$. One case is when $l=i$, $\frac{m-1}{2}$ cases are when $l=i+\beta$ for some $\beta\in\{1,...,\frac{m-1}{2}\}$, and $\frac{m-1}{2}$ cases are when $l=i-\beta$ for some $\beta \in \{1,...,\frac{m-1}{2}\}$. Suppose $l=i$. If $(x,y,i)\rightarrow(z,w,l)$ then we must have $x=z$ and $y\ne w$, or, $x\ne z$ and $y=w$. If $(z,w,l)\rightarrow(u,v,i+\alpha)$ then we must have $w=ux^{-1}$ if $y\ne w$, and $z=uy^{-1}$ if $w=y$. Thus there are two such vertices. Suppose $l=i+\alpha$ (i.e. $\beta=\alpha$). If $(z,w,l)\rightarrow(u,v,i+\alpha)$ then either $z=u$ and $w\ne v$, or $z\ne u$ and $w=v$. If $(x,y,i)\rightarrow(z,w,l)$ we must have $z=xy$, but since $u\ne xy$, we cannot have $z=u$. Thus we must have $w=v$ and $z=xy\ne u$, and there is only one such vertex. If we suppose $l=i+\beta$ where $\beta$ occurs before $\alpha$ in $\{1,...,\frac{m-1}{2}\}$ then \[(x,y,i)\rightarrow(z,w,l)\] implies $z=xy$, and \[(z,w,l)\rightarrow(u,v,i+\alpha)\] implies $u=zw=xyw$ so that $w=u(xy)^{-1}$. There is one such vertex per choice of such $\beta$. If we suppose $l=i+\beta$ where $\beta$ occurs after $\alpha$ in $\{1,...,\frac{m-1}{2}\}$ then \[(x,y,i)\rightarrow(z,w,l)\] implies $z=xy$ and \[(z,w,l)\rightarrow(u,v,i+\alpha)\] implies $v=zw=zxy$ so that $w=v(xy)^{-1}$. Thus there is one vertex per choice of such $\beta$. Finally, if we suppose $l=i-\beta$ for some $\beta\in\{1,...,\frac{m-1}{2}\}$ then \[(x,y,i)\rightarrow(z,w,l)\] implies $w=xy$, and \[(z,w,l)\rightarrow(u,v,i+\alpha)\] implies $v=zw=zxy$ so that $z=v(xy)^{-1}$ if $\alpha+\beta$ occurs in $\{\frac{m-1}{2}+1,...,m-1\}$, and implies $u=zw=zxy$ so that $z=u(xy)^{-1}$ if $\alpha+\beta$ occurs in $\{1,...,\frac{m-1}{2}\}$. Either way there is one vertex per choice of such $\beta$. Counting over all such vertices we have $2+m-1$ in total. Thus $\mu=m+1$.
\end{proof}
We will eliminate similar cases in the proof of the following theorem as well and, due to its standard counting arguments being not so different from the proof of the previous Theorem, we show less of it.
\begin{theorem}\label{th3.1} (Construction II) Let $G$ be a multiplicative group of order $n\geq 2$, and $g\in G$ be a non-identity, non-idempotent element. Let $\Gamma$ be the digraph with vertex set $\mathcal{V}=G\times G\times \mathbb{Z}_{m}$ where $m\geq 3$ is odd, and adjacency given by $(x,y,i)\rightarrow (u,v,j)$ if and only if one of the following hold:\begin{enumerate}
\item $x=u,i=j$ and $y\ne v$,
\item $y=v,i=j$ and $x\ne u$,
\item $u=xy$ and $j\in \{i+1,...,i+\frac{m-1}{2}\}$,
\item $v=xy$ and $j\in \{i-\frac{m-1}{2},...,i-1\}$,
\item $u=xyg$ and $j\in \{i+1,...,i+\frac{m-1}{2}\}$,
\item $v=xyg$ and $j\in \{i-\frac{m-1}{2},...,i-1\}$.
\end{enumerate} Then $\Gamma$ is a directed strongly regular graph with parameters \\ $(v,k,t,\lambda,\mu)=(mn^{2},2mn-2,2n+4m-6,n+4m-6,6+4(m-2))$.
\end{theorem}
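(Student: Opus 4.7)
The plan is to mirror the five-part template of Theorem~\ref{th3.0}, computing $v$, $k$, $t$, $\lambda$, $\mu$ in turn, with all the genuinely new work concentrated in bookkeeping for the two extra rules (5) and (6). The count $v = mn^{2}$ is immediate, and the out-degree follows from the observation that rules (1)--(6) produce pairwise disjoint out-neighbours of $(x,y,i)$: rules (1) and (2) together contribute $2(n-1)$, and each of (3), (4), (5), (6) contributes $n(m-1)/2$, with the hypothesis $g \neq Id$ keeping (3) disjoint from (5) and (4) from (6). Summing yields $k = 2(n-1) + 2n(m-1) = 2mn-2$.

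For $t$ I would split by $j - i \in \mathbb{Z}_{m}$. The slice $j = i$ contributes $2(n-1)$ two-way neighbours from the symmetric rules. For $j = i + \alpha$ with $\alpha \in \{1,\ldots,(m-1)/2\}$, a two-way neighbour requires a forward edge of type (3) or (5) paired with a backward edge of type (4) or (6); each of the four combinations determines $v$ uniquely (as one of $y^{-1}$, $y^{-1}g^{-1}$, $g^{-1}y^{-1}$, $g^{-1}y^{-1}g^{-1}$, with $u \in \{xy, xyg\}$). The pairwise distinctness of these four vertices is where the ``non-idempotent'' assumption enters, which I would interpret as $g^{2} \neq Id$; together with $g \neq Id$ it blocks every possible collision. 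Doubling for $j = i - \alpha$ and summing over $\alpha$ gives $t = 2(n-1) + 4(m-1) = 2n+4m-6$.

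The main obstacle is the case analysis for $\lambda$ and $\mu$, which now has six rules on both the target and the intermediate edge. I would exploit the $1 \leftrightarrow 2$, $3 \leftrightarrow 4$, $5 \leftrightarrow 6$ symmetries to cut the target analysis down to types (1), (3), (5) for $\lambda$, and to the single non-edge type ``$j = i + \alpha$ with $u \notin \{xy, xyg\}$'' for $\mu$. Within each I would split by $l - i \in \{0, \pm 1, \ldots, \pm(m-1)/2\}$ and enumerate forward/backward pairings; each sub-case reduces to one equation that solves uniquely for $z$ or $w$ as a product in $\{g^{\pm 1}, g^{\pm 2}, x, y, u, v\}$. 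The delicate points are the inequality constraints $z \neq u$ and $w \neq v$ carried by rules (1) and (2), together with potential coincidences such as $xy = xyg$ or $xyg^{2} = xy$, which are exactly what $g \neq Id$ and $g^{2} \neq Id$ rule out. Tallying should then yield $\lambda = (n-2) + 4(m-1) = n + 4m - 6$, with $n-2$ coming from the $l = i$ slice and $4$ from each of the $m-1$ slices $l = i \pm \alpha$, and $\mu = 6 + 4(m-2) = 4m - 2$, with the ``$6$'' gathering the two special slices ($l = i$ and $l$ matching the target) and the ``$4(m-2)$'' coming from the $m-2$ generic slices, each contributing four intermediate vertices.
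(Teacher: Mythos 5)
Your proposal follows essentially the same route as the paper's proof: the same five-part tally of $v,k,t,\lambda,\mu$, the same decomposition by the slice differences $j-i$ and $l-i$ with forward/backward rule pairings, and the same final counts ($2(n-1)+4(m-1)$ for $t$, $(n-2)+4(m-1)$ for $\lambda$ in the type-(1) case, and $4+2+4(m-2)$ for $\mu$), with $g\neq Id$ and $g^{2}\neq Id$ invoked at the same points as the paper. The only caveat, which matches the paper's own level of abbreviation, is that your stated per-slice breakdown of $\lambda$ is specific to a type-(1) target; for targets of types (3)--(6) the slices contribute differently (e.g.\ $2$ from $l=i$, $n$ from the target's slice, $4$ from each remaining slice) yet still total $n+4m-6$.
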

\begin{proof} Clearly $v=mn^{2}$.
\\[1\baselineskip]
$\underline{k}$: Let $(x,y,i)\in \mathcal{V}$. There are $n-1$ vertices $(x,y,i)\in \mathcal{V}$ with $z\ne x$, and $n-1$ vertices $(x,z,i)\in \mathcal{V}$ with $z\ne y$. Also, whenever $\alpha\in\{1,...,\frac{m-1}{2}\}$, there are: $n$ vertices of the form $(xy,z,i+\alpha)$, $n$ vertices of the form $(z,xy,i-\alpha)$, $n$ vertices of the form $(gxy,z,i+\alpha)$, and $n$ vertices of the form $(z,gxy,i-\alpha)$. Thus $k=2nm-2$.
\\[1\baselineskip]
$\underline{t}$: We need to count the number of vertices $(u,v,j)\in \mathcal{V}$ such that \[(x,y,i)\leftrightarrow (u,v,j).\] Let $(u,v,j)\in \mathcal{V}$. If $x=u$ and $i=j$ but $y\ne v$, or if $y=v$ and $i=j$ but $x\ne u$ then $(u,v,j)$ is such a vertex. This gives $2(n-1)$ such vertices.
\par
Let $\alpha\in \{1,...,\frac{m-1}{2}\}$ be fixed. Suppose $(x,y,i)\leftrightarrow (xy,z,i+\alpha)$.  Then either $y=zxy$ so that $z=x^{-1}$, or $y=gzxy$ so that $z=(gx)^{-1}$. Thus there are two such vertices.
\par
If $(x,y,i)\leftrightarrow (z,xy,i-\alpha)$, then we must have either $x=zxy$ so that $z=y^{-1}$, or $x=zgy$ so that $z=(gy)^{-1}$. There are two such vertices.
\par
If $(x,y,i)\leftrightarrow (gxy,z,i+\alpha)$, then either $y=gxyz$ so $z=(gx)^{-1}$, or $g^{2}zxy$ so $z=(g^{2}x)^{-1}$. Since $g^{2}$ is neither $Id$ nor $g$, there are two such vertices.
\par
If $(x,y,i)\leftrightarrow (z,gxy,i-\alpha)$, then either $x=zgxy$ so $x=(gy)^{-1}$, or $x=g^{2}zxy$ so $z=(g^{2})^{-1}$. Again, because of the condition on $g$, there are two such vertices.
\par
Counting over all such vertices (letting $\alpha$ run over $\{1,...,\frac{m-1}{2}\}$) we get $t=2n+4m-6$.
\\[1\baselineskip]
$\underline{\lambda}$: Now, given two vertices $(x,y,i),(u,v,j)\in \mathcal{V}$ such that $(x,y,i)\rightarrow(u,v,j)$, we need to count the number of vertices $(z,w,l)\in \mathcal{V}$ such that \[(x,y,i)\rightarrow(z,w,l)\text{ and }(z,w,l)\rightarrow(u,v,j).\] The six possibilities for such a vertex $(u,v,j)$ are listed in the statement of the theorem.
\par
We only show the first case, where $u=x$, $i=j$ and $y\ne v$, which uses the same standard counting arguments as the other five cases. So we have $(x,y,i)\rightarrow(x,v,i)$ and $y\ne v$. There are $n-2$ vertices $(z,w,l)\in \mathcal{V}$ such that $(x,y,i)\rightarrow(z,w,l)\text{ and }(z,w,l)\rightarrow(u,v,j)$ since, for $z=x$, there are $n-2$ points $w\in G$ such that $y\ne w\ne v$.
\par
There are $\frac{m-1}{2}$ vertices $(z,w,l)\in \mathcal{V}$ where $l=i+\alpha$ for some fixed $\alpha\in\{1,...,\frac{m-1}{2}\}$. Then one of the following must be hold: $z=xy$ and $v=xyw$ so that $w=v(xy)^{-1}$, or $z=gxy$ and $v=gxyw$ so that $w=v(gxy)^{-1}$, or $z=xy$ and $v=gxyw$ so that $w=v(g^{2}xy)^{-1}$.
\par
Also there are $\frac{m-1}{2}$ vertices $(z,w,l)\in \mathcal{V}$ where $l=i-\alpha$ for some fixed $\alpha\in\{1,...,\frac{m-1}{2}\}$. Then one of the following must hold: $w=xy$ and $x=u=xyz$ so that $z=y^{-1}$, or $w=xyg$ and $x=u=gxyz$ so that $z=(gy)^{-1}$, or $w=gxy$ and $x=u=xyz$ so that $z=y^{-1}$. Thus there are $n+4m-6$ such vertices. As the other five cases are similar, we have $\lambda=n+4m-6$.
\\[1\baselineskip]
$\underline{\mu}$: Now, given two vertices $(x,y,i),(u,v,j)\in \mathcal{V}$ such that \[(x,y,i)\nrightarrow(u,v,j),\] we need to count the number of vertices $(z,w,l)\in \mathcal{V}$ such that \[(x,y,i)\rightarrow(z,w,l)\text{ and }(z,w,l)\rightarrow(u,v,j).\]There are only two possibilities for $(u,v,j)$. One case is when $xy\ne u\ne gxy$ and $j=i+\alpha$ for some $\alpha\in \{1,...,\frac{m-1}{2}\}$, and one case is when $xy\ne v\ne gxy$ and $j=i-\alpha$ for some $\alpha\in\{1,...,\frac{m-1}{2}$. We show only the first case, as the second case is similar.
\par
Suppose $(x,y,i)\nrightarrow(u,v,i+\alpha)$ for some fixed $\alpha\in\{1,...,\frac{m-1}{2}\}$, and $xy\ne u\ne gxy$. There are $m$ possibilities for a vertex $(z,w,l)\in\mathcal{V}$: one case is when $l=i$, $\frac{m-1}{2}$ cases are when $l=i+\beta$ for some $\beta\in\{1,...,\frac{m-1}{2}\}$, and another $\frac{m-1}{2}$ cases are when $l=i-\beta$ for some $\beta\in\{1,...,\frac{m-1}{2}\}$.
\par
Suppose $l=i$. If $(x,y,i)\rightarrow(z,w,l)$ then either $x=z$ and $y\ne w$, or $x\ne z$ and $y=w$. If $(z,w,l)\rightarrow(u,v,i+\alpha)$ we must have $u=zw$ or $u=gzw$ if $x=z$ so that $w=ux^{-1}$ or $w=u(gx)^{-1}$, or $u=zy$ or $u=gzy$ if $w=y$, so that $z=uy^{-1}$ or $z=u(gy)^{-1}$. Thus there are four such vertices.
\par
Now suppose $l=i+\alpha$ (i.e. $\beta=\alpha$). If $(z,w,l)\rightarrow(u,v,i+\alpha)$ then we must have either $z=u$ and $w\ne v$, or $z\ne u$ and $w=v$. If $(x,y,i)\rightarrow(z,w,l)$ then we must have $z=xy$ or $z=gxy$. Since $xy\ne u\ne gxy$ we cannot have $z=u$. Thus $z\ne u$ and $w=v$. Then we must have $w=v$ and $z=xy\ne u$, or $z=gxy\ne u$. Thus there are two such vertices. The remaining subcases are: $l=i+\beta$ where $\beta$ occurs before $\alpha$ in the natural order of $\{1,...,\frac{m-1}{2}\}$, $l=i+\beta$ where $\beta$ occurs after $\alpha$ in the natural order of $\{1,...,\frac{m-1}{2}\}$, and $l=i-\beta$ for any $\beta\in \{1,...,\frac{m-1}{2}\}$. It is not difficult to show that there are four vertices per such choice of $\beta$. As the second case, where $xy\ne v\ne gxy$ and $j=i-\alpha$ for some $\alpha\in\{1,...,\frac{m-1}{2}\}$, is similar to the one we have just show, we can conclude that $\mu=6+4(m-2)$. This completes the proof.
\end{proof}
The following table shows new parameters for directed strongly regular graphs constructed in the above way with less than $100$ vertices.

\begin{center}
\begin{longtable}{c|cc|ccccc}\caption{Parameters of new directed strongly regular graphs with less than 100 vertices.}\\
Ref&$m$&$n$&$v$&$k$&$t$&$\lambda$&$\mu$ \\ \hline
Theorem \ref{th3.0}&5&4&80&22&10&6&6 \\
Theorem \ref{th3.1}&9&3&81&52&36&33&34 \\
Theorem \ref{th3.0}&9&3&81&28&12&9&10 \\
Theorem \ref{th3.0}&11&3&99&34&14&11&12 \\
Theorem \ref{th3.1}&11&3&99&64&44&41&42 \\
\end{longtable}\end{center}

Next we discuss how to construct new Cayley graphs that are directed strongly regular.
\section{Constructions of Cayley Graphs that are Directed Strongly Regular}

Olmez and Song in \cite{OS} showed constructed directed strongly regular graphs with parameters $((m+1)s,ls,ld,ld-d,ld)$ where $d$, $l$ and $s$ are integers with $dm=ls$ and $1\leq l<m$. We show how to obtain directed strongly regular graphs with these parameters which are also Cayley graphs. Interestingly, it is possible to construct Cayley graphs which are directed strongly regular with these parameters.
\par
Let $S$ be a subset of a finite group $G$ such that $\langle S \rangle=G$, i.e., $G$ is generated by the members of $S$. The {\it Cayley Graph} of $G$ with respect to $S$, denoted $Cay(G,S)$, is the directed graph with elements of $G$ as its vertices, where $g\rightarrow h$ if and only if $g^{-1}h\in S$.
\par
For any finite group $G$ the {\it group ring} $\mathbb{Z}\left[G\right]$ is defined as the set of all formal sums of elements of $G$, with coefficients in $\mathbb{Z}$. The operations ``$+$'' and ``$\cdot$'' on $\mathbb{Z}\left[G\right]$ are given by \[
\sum_{g\in G}a_{g}g+\sum_{g\in G}b_{g}g=\sum_{g\in G}(a_{g}+b_{g}) \] and \[
\left(\sum_{g\in G}a_{g}g\right)\left(\sum_{h\in G}b_{h}h\right)=\sum_{g,h\in G}gh.\] where are $a_{g},b_{g}\in \mathbb{Z}$.
\par
The group ring $\mathbb{Z}\left[G\right]$ is a ring with multiplicative identity $\mathbf{1}=x^{Id}$, and for any subset $X\subset G$, we denote by $\underline{X}$ the sum $\sum_{x\in X}x$, and we denote by $\underline{X}^{-1}$ the sum $\sum_{x\in X}x^{-1}$.
We will need the following lemmas.
\begin{lemma} (\cite{DUV1}) Let $G$ be a finite group and $S \subset G$ such that $G=\langle S\rangle$. The number of paths of length two from $g$ to $h$ in $Cay(G,S)$ is given by the coefficient of $g^{-1}h$ in $\underline{S}^{2}$.
\end{lemma}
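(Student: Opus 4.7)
The plan is to expand $\underline{S}^2$ directly and match the coefficient of $g^{-1}h$ against the count of length-two paths, using the defining property of the Cayley graph to translate adjacency conditions into equations in $G$.

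First I would fix vertices $g,h\in G$ and parametrize a length-two path from $g$ to $h$ by its intermediate vertex $x\in G$. By the definition of $Cay(G,S)$, having $g\to x$ is equivalent to $g^{-1}x=s_1$ for some $s_1\in S$, and $x\to h$ is equivalent to $x^{-1}h=s_2$ for some $s_2\in S$. Multiplying these two identities gives $s_1s_2=(g^{-1}x)(x^{-1}h)=g^{-1}h$. Conversely, any pair $(s_1,s_2)\in S\times S$ with $s_1s_2=g^{-1}h$ determines a unique intermediate vertex $x=gs_1$ (and then automatically $x^{-1}h=s_1^{-1}g^{-1}h=s_2\in S$). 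Thus the map $x\mapsto(g^{-1}x,\,x^{-1}h)$ is a bijection between the set of length-two paths from $g$ to $h$ and the set $\{(s_1,s_2)\in S\times S:s_1s_2=g^{-1}h\}$.

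Next I would compute
\[
\underline{S}^{2}=\Bigl(\sum_{s\in S}s\Bigr)\Bigl(\sum_{s'\in S}s'\Bigr)=\sum_{s,s'\in S}ss'=\sum_{w\in G}\Bigl|\{(s,s')\in S\times S:ss'=w\}\Bigr|\,w,
\]
so that the coefficient of any group element $w$ in $\underline{S}^{2}$ equals the number of ordered factorizations $w=ss'$ with $s,s'\in S$. Specializing to $w=g^{-1}h$ and combining with the bijection of the previous step gives the stated identity.

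The argument is essentially a bookkeeping exercise, so there is no real obstacle; the only point that requires a sentence of care is verifying that the intermediate vertex $x$ is uniquely recovered from the pair $(s_1,s_2)$, which follows from cancellativity in the group. No other results from the excerpt need to be invoked.
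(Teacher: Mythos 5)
Your argument is correct: the bijection between intermediate vertices $x$ and ordered pairs $(s_1,s_2)\in S\times S$ with $s_1s_2=g^{-1}h$, matched against the coefficient of $g^{-1}h$ in $\underline{S}^{2}$, is exactly the standard proof of this fact. The paper itself gives no proof (it simply cites Duval et al.), so there is nothing further to compare; note only that the paper's displayed definition of multiplication in $\mathbb{Z}[G]$ omits the coefficients $a_gb_h$ by an evident typo, and your computation uses the correct convention.
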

\begin{lemma}\label{le4.0} (\cite{DUV1}) Let $G$ be a finite group and $S \subset G$ such that $G=\langle S\rangle$ and $Id\notin S$. A Cayley graph $Cay(G,S)$ is a directed strongly regular graph with parameters $(v,k,t,\lambda\mu)$ if and only if $|G|=n,|S|=k$ and \[\underline{S}^{2}=t\mathbf{1}+\lambda \underline{S}+\mu (\underline{G}-\mathbf{1}-\underline{S}).\]
\end{lemma}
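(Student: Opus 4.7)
The plan is to translate the adjacency-matrix definition of a directed strongly regular graph into an equivalent statement in the group ring $\mathbb{Z}[G]$, using the canonical correspondence between $A$ and the element $\underline{S}$. First, I would dispose of the easy part of the biconditional: $v=|V(Cay(G,S))|=|G|$, so setting $|G|=n$ is nothing more than $v=n$. For the regularity condition $AJ=JA=kJ$, observe that the out-neighbors of a vertex $g$ are $\{gs:s\in S\}$, a set of cardinality $|S|$, and dually the in-neighbors are $\{gs^{-1}:s\in S\}$, also of cardinality $|S|$. Hence $AJ=JA=kJ$ holds precisely when $|S|=k$. (Note that $Id\notin S$ is what forces $Cay(G,S)$ to be a simple directed graph, so that $A$ has zero diagonal.)

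The core of the argument is the equivalence between the matrix identity
\[A^{2}=t I+\lambda A+\mu(J-I-A)\]
and the group-ring identity
\[\underline{S}^{2}=t\mathbf{1}+\lambda\underline{S}+\mu(\underline{G}-\mathbf{1}-\underline{S}).\]
By the preceding lemma, the $(g,h)$ entry of $A^{2}$, which counts directed paths of length two from $g$ to $h$, equals the coefficient of $g^{-1}h$ in $\underline{S}^{2}$. Meanwhile, the $(g,h)$ entry of the right-hand matrix equals $t$ when $g=h$, $\lambda$ when $g^{-1}h\in S$, and $\mu$ when $g\neq h$ and $g^{-1}h\notin S$. Thus the matrix equation holds if and only if the coefficient of $x$ in $\underline{S}^{2}$ is $t$ for $x=Id$, $\lambda$ for $x\in S$, and $\mu$ for $x\in G\setminus(\{Id\}\cup S)$, which is exactly the asserted group-ring identity.

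Finally, I would note that each direction of the biconditional follows by reading the argument forwards or backwards: given the group-ring identity, the coefficient description immediately yields the matrix equation by indexing entries via $g^{-1}h$; conversely, given the matrix equation, we recover the coefficients of $\underline{S}^{2}$ by looking at any fixed row (say the row indexed by $Id$), since the coefficient of $x$ in $\underline{S}^{2}$ is $(A^{2})_{Id,x}$.

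The main obstacle is essentially nothing more than bookkeeping. The only genuinely substantive point is that in a Cayley graph one gets $AJ=kJ$ and $JA=kJ$ simultaneously, which relies on left-multiplication by elements of $G$ acting as graph automorphisms of $Cay(G,S)$ (vertex transitivity), so in-degrees and out-degrees at every vertex equal $|S|$ automatically; the rest is purely a dictionary between coefficients in $\mathbb{Z}[G]$ and entries of the adjacency matrix, and requires no computation beyond that already packaged in the preceding lemma.
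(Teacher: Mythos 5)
Your proof is correct. The paper itself gives no proof of this lemma (it is quoted from \cite{DUV1}), so there is nothing to compare against line by line; your argument is the standard one and is exactly what the citation packages: the dictionary $(A)_{g,h}=1 \iff g^{-1}h\in S$ turns the entrywise reading of $A^{2}=tI+\lambda A+\mu(J-I-A)$ (diagonal, positions with $g^{-1}h\in S$, remaining positions, using $Id\notin S$ to keep these three cases disjoint) into the coefficient identity for $\underline{S}^{2}$, with the preceding lemma supplying the path count. One minor simplification: the condition $AJ=JA=kJ$ does not really need vertex transitivity --- each row of $A$ has exactly $|S|$ ones (out-neighbors $gS$) and each column has exactly $|S|$ ones (in-neighbors $gS^{-1}$), so both identities are immediate from counting.
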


Let $\mu:H\rightarrow$Aut$K$ be an action of a group $H$ on another group $K$. Let $K\times_{\mu}H$ be the direct product set of $K$ and $H$ with operation \[
(h,k)(h',k')=(h\left[k^{\mu}(h')\right],kk').\] Then $K\times_{\mu}H$ forms a group of order $|K||H|$ with identity $(Id_{K},Id_{H})$ and is called the {\it semidirect product} of $K$ and $H$ with respect to $\mu$. When $\mu$ is clear from the context we simply use $K\rtimes H$ to denote this structure.
\par
Now let $q$ be a prime power and $D$ a multiplicative subgroup of $\mathbb{F}_{q}^{*}$ of order $l$ with generator $\gamma$. Let \[ H=\langle \tau\mid \tau^{l}=1,\tau x=x^{\gamma}\tau\text{ for all }x\in\mathbb{F}_{q}\rangle\] and $G=\mathbb{F}_{q}\rtimes H$. Let $D'\subset\mathbb{F}_{q}$ be $D$-invariant (i.e $xD'=D'$ for all $x\in D$). Set \[
S=-u+D=\{-u+x\mid x\in D\}\] for some nonzero $u\in \mathbb{F}_{q}$, and \[
E=SH=\{xh\mid x\in S,h\in H\}.\] Then we have \[\underline{E}^{2}=\underline{SH}\underline{SH}=\underline{S\Omega}\underline{H},\] where \[S\Omega=\{x+\omega\mid x\in S,\omega\in -uD+D'\}\] (here we used the fact that $xD'=D'$ for all $x\in D$). Note that if \[\underline{D'}\underline{uD}^{-1}=\alpha\mathbf{1}+\beta\mathbb{F}_{q}\] for some $\alpha,\beta\in \mathbb{Z}$. Then we have \[
\underline{E}^{2}=\underline{S\Omega}\underline{H}=\underline{S}(\alpha\mathbf{1}+\beta\underline{\mathbb{F_{q}}})\underline{H}=\alpha\underline{E}
+\beta|S|\underline{G}, \] and, if $0\notin S$, then we have that $Cay(G,E)$ is directed strongly regular. By Lemma \ref{le4.0} we can see that the parameters of $Cay(G,E)$ are given by $(lq,ls,\beta s,\beta s+\alpha,\beta s)$ where $s=|S|$. We have thus shown the following.
\begin{lemma}\label{le4.1} Let $q$ be a prime power and $D$ a multiplicative subgroup of $\mathbb{F}_{q}^{*}$ of order $l$ and generator $\gamma$. Let $G=\mathbb{F}_{q}^{*}\rtimes H$ where $H=\langle \tau\mid \tau^{l}=1,\tau x=x^{\gamma}\tau\text{ for all }x\in\mathbb{F}_{q}\rangle$. Suppose $D'\subset \mathbb{F}_{q}^{*}$ is $D$-invariant, and let $E=SH$ where $S=-u+D'$ for some $u\in\mathbb{F}_{q}^{*}$. If $0\notin S$ and $\underline{D'}\underline{uD}^{-1}=\alpha\mathbf{1}+\beta\mathbb{F}_{q}$ for some $\alpha,\beta\in \mathbb{Z}$, then we have $Cay(G,E)$ is a directed strongly regular graph with parameters $(lq,ls,\beta s,\beta s+\alpha,\beta s)$.
\end{lemma}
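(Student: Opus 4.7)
The plan is to verify, via Lemma \ref{le4.0}, the group-ring identity
\[
\underline{E}^{2} = t\mathbf{1} + \lambda \underline{E} + \mu(\underline{G} - \mathbf{1} - \underline{E})
\]
with $t = \mu = \beta s$ and $\lambda = \alpha + \beta s$. The side conditions are immediate: $|G| = |\mathbb{F}_{q}|\cdot|H| = lq$; $|E| = |S||H| = ls$ because every element of the semidirect product is uniquely written as $xh$ with $x \in \mathbb{F}_{q}$ and $h \in H$; and $Id_{G} = (0,Id_{H}) \notin E$ follows from the assumption $0 \notin S$.

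First I would expand $\underline{E}^{2} = (\underline{S}\,\underline{H})(\underline{S}\,\underline{H})$ in $\mathbb{Z}[G]$ using the multiplication rule $(x,h)(x',h') = (x + h\cdot x',\, hh')$. Summing over $h'$ first, the fact that $h'\mapsto hh'$ permutes $H$ collapses that inner sum to $\underline{H}$, giving
\[
\underline{E}^{2} = \underline{S}\Bigl(\sum_{h\in H,\; x'\in S} h\cdot x'\Bigr)\underline{H},
\]
with the middle factor lying in $\mathbb{Z}[\mathbb{F}_{q}]$, where $\mathbb{F}_{q}$ is regarded as its additive group.

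The key technical step is to simplify this middle factor. Writing $x' = -u + d'$ with $d'\in D'$ and $h = \tau^{j}$ acting by multiplication by $\gamma^{j}$, I would invoke the $D$-invariance $\gamma^{j}D' = D'$ to reparametrize and factor the double sum as
\[
\sum_{j=0}^{l-1}\sum_{d'\in D'}\bigl(-\gamma^{j}u + \gamma^{j}d'\bigr) \;=\; \underline{D'}\,\underline{uD}^{-1}.
\]
Plugging in the hypothesis $\underline{D'}\,\underline{uD}^{-1} = \alpha\mathbf{1} + \beta\underline{\mathbb{F}_{q}}$, together with the standard identities $\underline{S}\cdot\underline{\mathbb{F}_{q}} = s\,\underline{\mathbb{F}_{q}}$ (translation invariance) and $\underline{\mathbb{F}_{q}}\cdot\underline{H} = \underline{G}$ (unique factorization in the semidirect product), yields
\[
\underline{E}^{2} = \alpha\underline{E} + \beta s\,\underline{G}.
\]

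To finish, I would rewrite this as $\beta s\,\mathbf{1} + (\alpha+\beta s)\underline{E} + \beta s(\underline{G}-\mathbf{1}-\underline{E})$, so that Lemma \ref{le4.0} applies and the parameters can be read off as claimed. I expect the main obstacle to be the careful bookkeeping in the middle-factor reduction, where the additive group-ring on $\mathbb{F}_{q}$ and the multiplicative action of $D$ coexist and it is easy to lose track of signs, multiplicities, and the distinction between formal and field-theoretic sums; the $D$-invariance of $D'$ is precisely the ingredient that collapses the double sum to the clean product $\underline{D'}\,\underline{uD}^{-1}$ which the hypothesis is designed to control.
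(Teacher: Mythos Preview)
Your proposal is correct and follows essentially the same route as the paper: expand $\underline{E}^{2}=\underline{SH}\,\underline{SH}$, collapse the inner $H$-sum, use the $D$-invariance of $D'$ to reduce the middle factor to $\underline{D'}\,\underline{uD}^{-1}$, substitute the hypothesis to obtain $\underline{E}^{2}=\alpha\underline{E}+\beta s\,\underline{G}$, and then read off the parameters via Lemma~\ref{le4.0}. Your write-up is in fact a bit more explicit than the paper's about the bookkeeping in the middle-factor reduction, but the argument is the same.
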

We have yet to find new parameters using this method, but the Cayley property makes it interesting, and there may be certain pairs $D,-u+D'$ which do result in directed strongly regular graphs with new parameters. We will need the following lemmas.

\begin{lemma}\label{le4.2} Let $q=em+1$ be a prime power for some positive integers $e$ and $m$. In the group ring $\mathbb{Z}\left[\mathbb{F}_{q}\right]$ we have \[
\underline{D_{i}^{e}}\underline{D_{j}^{e}}=a_{ij}\mathbf{1}+\sum_{k=0}^{e-1}(j-i,k-i)_{e}\underline{D_{k}^{e}}\] where \[
a_{ij}=\begin{cases} m, \text{ if } m \text{ is even and } j=i, \\
                     m, \text{ if } m \text{ is odd and } j=i+\frac{e}{2}, \\
                     0, \text{ otherwise. }\end{cases}\]
\end{lemma}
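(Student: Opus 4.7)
The plan is to compute the coefficient of each $c \in \mathbb{F}_q$ in the group ring expansion $\underline{D_i^e}\,\underline{D_j^e} = \sum_{a \in D_i^e,\, b \in D_j^e}(a+b)$. This coefficient equals $N(c) := |\{(a,b) \in D_i^e \times D_j^e : a+b=c\}|$, and I would split the analysis into $c=0$ (contributing the $\mathbf{1}$ term) and $c \ne 0$ (contributing the sum over the $\underline{D_k^e}$).

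For $c=0$, $N(0) = |\{a \in D_i^e : -a \in D_j^e\}|$, which is controlled by where $-1$ lies in the cyclotomic partition. Since $-1 = \alpha^{(q-1)/2} = \alpha^{em/2}$, we have $-D_\ell^e = D_\ell^e$ when $m$ is even and $-D_\ell^e = D_{\ell+e/2}^e$ when $m$ is odd (the latter forces $e$ to be even, so that $e/2$ is an integer). Consequently $N(0)=m$ precisely when $j \equiv i \pmod{e}$ (in the $m$-even case) or $j \equiv i+e/2 \pmod{e}$ (in the $m$-odd case), and $N(0)=0$ otherwise, which matches the description of $a_{ij}$.

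For $c \in D_k^e$, the decisive move is to normalize by $c$. Setting $x=a/c$ and $y=b/c$, the condition $a+b=c$ becomes $x+y=1$ with $x \in D_{i-k}^e$ and $y \in D_{j-k}^e$, so $N(c) = |\{x \in D_{i-k}^e : 1-x \in D_{j-k}^e\}|$. Writing $1-x = -(x-1)$ and invoking the description of $-D_\ell^e$ established above turns this count into a cyclotomic number: $(i-k,\,j-k)_e$ when $m$ is even and $(i-k,\,j-k+e/2)_e$ when $m$ is odd. In particular $N(c)$ depends only on the class $k$ of $c$, so the nonzero contributions aggregate into $\sum_{k=0}^{e-1} N(c_k)\,\underline{D_k^e}$ for representatives $c_k \in D_k^e$.

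The remaining step is a purely formal simplification: applying property~(1), $(h,l)_e = (e-h,\,l-h)_e$, followed by property~(2) --- which either swaps the two arguments ($f=m$ even) or swaps and shifts each by $e/2$ ($f=m$ odd) --- should rewrite $N(c_k)$ as $(j-i,\,k-i)_e$. The main obstacle is the index bookkeeping in the $m$-odd case: the $e/2$ shift inherited from $-1 \in D_{e/2}^e$ has to be absorbed by the shift introduced in property~(2), and one must keep all indices reduced modulo $e$. Once the two symmetries are applied in the correct order, both parities collapse to the single formula claimed in the lemma.
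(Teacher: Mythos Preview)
The paper states this lemma without proof, treating it as a known identity from the theory of cyclotomic numbers. Your outline is precisely the standard derivation: expand the group-ring product as $\sum_{c} N(c)\,c$ with $N(c)=\lvert\{(a,b)\in D_i^e\times D_j^e : a+b=c\}\rvert$, handle $c=0$ via the cyclotomic class containing $-1$, and for $c\in D_k^e$ divide through by $c$ to reduce $N(c)$ to a count of the form $\lvert\{x\in D_{i-k}^e : 1-x\in D_{j-k}^e\}\rvert$, which is a cyclotomic number up to the symmetries~(1) and~(2). The parity split you describe and the subsequent index massage are exactly how the two cases collapse into the single expression $(j-i,k-i)_e$; since the paper offers no argument of its own, there is nothing further to compare.
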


\begin{lemma}\label{le4.3} (\cite{D}) If $q \equiv 1$ $($mod $4)$ then the cyclotomic numbers of order two are given by
\begin{eqnarray*}
(0,0) & = & \frac{q-5}{4},                         \\
(0,1) & = & (1,0) = (1,1) = \frac{q-1}{4}.
\end{eqnarray*} If $q \equiv 3 ($mod $4)$ then are given by
\begin{eqnarray*}
(0,1) & = & \frac{q+1}{4},                         \\
(0,1) & = & (1,0) = (1,1) = \frac{q-3}{4}.
\end{eqnarray*}
\end{lemma}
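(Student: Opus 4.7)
The plan is to pin down the four cyclotomic numbers $(0,0),(0,1),(1,0),(1,1)$ of order two by first collapsing them via the symmetry relations (1)--(2) and then solving a small linear system coming from row sums of the incidence table $(D_i, D_j+1)$.

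First I apply the stated symmetries with $e=2$, being careful about the parity of $f = (q-1)/2$. When $q \equiv 1 \pmod 4$, $f$ is even: the relation $(h,k) = (k,h)$ gives $(0,1) = (1,0)$, while relation (1) gives $(1,0) = (1,-1) = (1,1)$ (indices mod $2$). So $(0,1) = (1,0) = (1,1)$ share a common value and $(0,0)$ is separate. When $q \equiv 3 \pmod 4$, $f$ is odd: the relation $(h,k) = (k+1, h+1)$ together with the appropriate form of (1) forces $(0,0) = (0,1) = (1,1)$ to share a common value, leaving $(1,0)$ separate.

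Second, I compute the two row sums. Since the translated sets $D_0^2 + 1$ and $D_1^2 + 1$ are disjoint and together cover $\mathbb{F}_q \setminus \{1\}$, one has for each $i \in \{0,1\}$
\[ \sum_{j=0}^{1} (i,j)_2 \;=\; \bigl|D_i^2 \cap (\mathbb{F}_q \setminus \{1\})\bigr|. \]
Because $1 = \alpha^{0} \in D_0^2$ while $1 \notin D_1^2$, this yields the two equations
\[ (0,0)+(0,1) = \tfrac{q-3}{2}, \qquad (1,0)+(1,1) = \tfrac{q-1}{2}. \]

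Third, in each parity class I combine the collapsed identities from Step~1 with these two row sums to obtain a $2\times 2$ linear system in the two remaining unknowns, which solves uniquely. For instance, when $q \equiv 1 \pmod 4$, set $Y = (0,0)$ and $X = (0,1) = (1,0) = (1,1)$; the system $Y + X = (q-3)/2$, $2X = (q-1)/2$ gives $X = (q-1)/4$ and $Y = (q-5)/4$. The case $q \equiv 3 \pmod 4$ is handled identically, with the distinct cyclotomic number being the one singled out by the different flavor of symmetry (2). The only real obstacle is the bookkeeping of which cyclotomic number ends up as the distinct one in each parity class; the linear algebra at the end is trivial.
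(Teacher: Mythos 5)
Your overall strategy (collapse the four numbers via the symmetry relations, then solve the row-sum equations) is the standard one, and since the paper only cites this lemma from the reference without proving it, there is no in-paper argument to compare against. Your even case ($q \equiv 1 \pmod 4$) is correct. The genuine problem is the grouping in the odd case. With $e=2$, relation (1) reads $(h,k)=(e-h,k-h)$, and since $e-h\equiv h \pmod 2$ it gives $(1,0)=(1,-1)=(1,1)$ \emph{unconditionally}; relation (2) with $f$ odd reads $(h,k)=(k+1,h+1)$ and gives $(0,0)=(1,1)$ (its other two instances are vacuous). Together these force $(0,0)=(1,0)=(1,1)$ with $(0,1)$ as the distinguished number --- not $(0,0)=(0,1)=(1,1)$ with $(1,0)$ distinguished, as you claim. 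Feeding your grouping into your row sums places the exceptional value $\frac{q+1}{4}$ on $(1,0)$ rather than on $(0,1)$, which contradicts the lemma as stated.

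There is also a convention mismatch you should resolve. Your row sums use the paper's literal definition $(i,j)=|D_i^e\cap(D_j^e+1)|$, for which the distinguished element is $1\in D_0^2$ and the two sums $\frac{q-3}{2},\frac{q-1}{2}$ are independent of the parity of $f$. But the symmetry relations (1)--(2) as stated hold for the transposed (standard) convention, namely the number of $x\in D_i$ with $x+1\in D_j$; there the distinguished element is $-1$, whose quadratic character depends on $q\bmod 4$, so for $q\equiv 3\pmod 4$ the row sums flip to $(0,0)+(0,1)=\frac{q-1}{2}$ and $(1,0)+(1,1)=\frac{q-3}{2}$. With the correct grouping and these row sums one gets $2\,(1,1)=\frac{q-3}{2}$, hence the common value $\frac{q-3}{4}$, and $(0,1)=\frac{q-1}{2}-\frac{q-3}{4}=\frac{q+1}{4}$, in agreement with the lemma. (Be aware that the lemma's own statement contains a typo in the second case: the line ``$(0,1)=(1,0)=(1,1)=\frac{q-3}{4}$'' should begin with $(0,0)$, and the first displayed equation of that case is mislabeled as well.)
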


We have the following construction of directed strongly regular Cayley graphs.

\begin{theorem} Let $q$ be a prime power, $\gamma$ be a primitive element and write $l=\frac{q-1}{2}$. Let $H=\langle \tau\mid \tau^{l}=1,\tau x=x^{\gamma^{2}}\tau\text{ for all }x\in\mathbb{F}_{q}\rangle$, $G=\mathbb{F}_{q}\rtimes H$, and $E=SH$ where $S\subset\mathbb{F}_{q}$. If $q\equiv 3($mod $4)$ and $S=-u+D_{0}^{2}\cup\{0\}$ where $u$ is a quadratic nonresidue, then $Cay(G,E)$ is a directed strongly regular graph with parameters $(q\frac{q-1}{2},\frac{q+1}{2}\frac{q-1}{2},\frac{q+1}{4}\frac{q-1}{2},\frac{q+1}{4}\frac{q-3}{2},\frac{q+1}{4}\frac{q-1}{2})$. If $q\equiv 1($mod $4)$ and $S=-u+D_{0}^{2}$ where $u$ is a quadratic nonresidue, then $Cay(G,E)$ is a directed regular graph with parameters $(q\frac{q-1}{2},(\frac{q-1}{2})^{2},\frac{q-1}{4}\frac{q-1}{2},\frac{q-1}{4}\frac{q-3}{2},\frac{q-1}{4}\frac{q-1}{2})$.
\end{theorem}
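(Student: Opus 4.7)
The plan is to apply Lemma~\ref{le4.1} after identifying the correct cyclotomic data. Take $D=D_0^2$, the group of quadratic residues of order $l=(q-1)/2$ generated by $\gamma^2$, and read off $D'$ from $S$: in Case~1 ($q\equiv 3\pmod{4}$) one needs $D'=D_0^2\cup\{0\}$ so that $S=-u+D'$, while in Case~2 ($q\equiv 1\pmod{4}$) one takes $D'=D_0^2$. The $D$-invariance of $D'$ is immediate (its nonzero part is $D$ and $0$ is fixed by scalar multiplication), and $0\notin S$ follows from $u$ being a nonresidue, so $u\notin D'$ and no translate $-u+d$ vanishes.

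I would next simplify $\underline{uD}^{-1}$. Since $u$ is a nonresidue, $uD=D_1^2$; the convention $\underline{X}^{-1}=\sum_{x\in X}x^{-1}$, with inverses taken in $G$, reduces to additive negation on $\mathbb{F}_q$, so $\underline{uD}^{-1}=\underline{-D_1^2}$. The two cases diverge here: for $q\equiv 3\pmod{4}$, $-1\in D_1^2$ forces $-D_1^2=D_0^2$ and $\underline{uD}^{-1}=\underline{D_0^2}$; for $q\equiv 1\pmod{4}$, $-1\in D_0^2$ forces $-D_1^2=D_1^2$ and $\underline{uD}^{-1}=\underline{D_1^2}$.

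The heart of the proof is showing $\underline{D'}\,\underline{uD}^{-1}=\alpha\mathbf{1}+\beta\underline{\mathbb{F}_q}$ and reading off $\alpha,\beta$. Case~2 is clean: apply Lemma~\ref{le4.2} directly to $\underline{D_0^2}\,\underline{D_1^2}$; since $m=(q-1)/2$ is even, $a_{01}=0$, and Lemma~\ref{le4.3} gives equal cyclotomic coefficients $(1,0)_2=(1,1)_2=(q-1)/4$, so the $\underline{D_0^2}$ and $\underline{D_1^2}$ terms collapse into a multiple of $\underline{\mathbb{F}_q}-\mathbf{1}$. Case~1 is more delicate: expand $\underline{D'}\,\underline{uD}^{-1}=(\underline{D_0^2}+\mathbf{1})\underline{D_0^2}=(\underline{D_0^2})^2+\underline{D_0^2}$, apply Lemma~\ref{le4.2} to the square (now with $m$ odd, so $a_{00}=0$) and Lemma~\ref{le4.3}; here the coefficients $(0,0)_2$ and $(0,1)_2$ differ by exactly $1$, and the extra $+\underline{D_0^2}$ summand is precisely what is needed to promote the smaller coefficient so that the two match, again merging into a multiple of $\underline{\mathbb{F}_q}-\mathbf{1}$. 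Once $\alpha,\beta$ are identified, Lemma~\ref{le4.1} gives the parameter tuple $(lq,ls,\beta s,\beta s+\alpha,\beta s)$.

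The main obstacle is the cyclotomic bookkeeping in Case~1: without the extra $\mathbf{1}$ in $\underline{D'}$ the square $(\underline{D_0^2})^2$ does not have the required $\alpha\mathbf{1}+\beta\underline{\mathbb{F}_q}$ form, and the argument depends on the numerical identity $(0,1)_2-(0,0)_2=1$ valid when $q\equiv 3\pmod{4}$.
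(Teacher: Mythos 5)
Your proposal follows the paper's proof essentially step for step: the same identification $D=D_0^2$ with $D'=D_0^2\cup\{0\}$ (resp.\ $D'=D_0^2$), the same use of the residuacity of $-1$ to reduce $\underline{uD}^{-1}$ to $\underline{D_0^2}$ (resp.\ $\underline{D_1^2}$), and the same application of Lemmas~\ref{le4.2} and~\ref{le4.3}, with the identity $(0,1)_2=(0,0)_2+1=\frac{q+1}{4}$ carrying the $q\equiv 3\ (\mathrm{mod}\ 4)$ case; the cyclotomic core of your argument is correct and identical to the paper's, and in the $q\equiv 1\ (\mathrm{mod}\ 4)$ case your values $\alpha=-\frac{q-1}{4}$, $\beta=\frac{q-1}{4}$, $s=\frac{q-1}{2}$ reproduce the stated parameters exactly.

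One caveat concerns your final sentence in the $q\equiv 3\ (\mathrm{mod}\ 4)$ case, where you stop at ``Lemma~\ref{le4.1} gives $(lq,ls,\beta s,\beta s+\alpha,\beta s)$'' without evaluating it. There $s=|S|=\frac{q+1}{2}$, so the tuple is $\bigl(q\frac{q-1}{2},\ \frac{q-1}{2}\frac{q+1}{2},\ \frac{q+1}{4}\frac{q+1}{2},\ \frac{q+1}{4}\frac{q-1}{2},\ \frac{q+1}{4}\frac{q+1}{2}\bigr)$, which is not the tuple displayed in the theorem: the displayed $t,\lambda,\mu$ use $\frac{q-1}{2}$ in place of $s$, and they in fact violate the basic count $k^{2}=t+\lambda k+\mu(v-1-k)$ (for $q=7$ the display gives $(21,12,6,4,6)$, whereas the construction yields $(21,12,8,6,8)$). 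The paper's own proof makes the same substitution $|S|=\frac{q-1}{2}$ when writing $\underline{E}^{2}=-\frac{q+1}{4}\underline{E}+\frac{q+1}{4}\bigl(\frac{q-1}{2}\bigr)\underline{G}$, so this is a slip in the statement rather than a defect of your method; still, your write-up should evaluate the tuple explicitly rather than assert agreement with the stated parameters.
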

\begin{proof} Let $q\equiv 3($mod $4)$ and $S=-u+D_{0}^{2}\cup\{0\}$ where $u$ is a quadratic nonresidue. Combining Lemmas \ref{le4.2} and \ref{le4.3} we have \[
\underline{(D_{1}^{2})^{-1}}\underline{D_{0}^{2}}=\underline{D_{0}^{2}}(\underline{D_{0}^{2}}+\mathbf{1})=((0,0)_{2}+1)
\underline{D_{0}^{2}}+(0,1)_{2}\underline{D_{1}^{2}},\] where $(0,1)_{2}=(0,0)_{2}+1=\frac{q+1}{4}$. Thus, by Lemma \ref{le4.1} we have \[
\underline{E}^{2}=-\frac{q+1}{4}\underline{E}+\frac{q+1}{4}(\frac{q-1}{2})\underline{G},\] and $Cay(G,E)$ is a directed strongly regular graph with parameters \\$(q\frac{q-1}{2},\frac{q+1}{2}\frac{q-1}{2},\frac{q+1}{4}\frac{q-1}{2},\frac{q+1}{4}\frac{q-3}{2},\frac{q+1}{4}\frac{q-1}{2})$. Similarly, when $q\equiv 1($mod $4)$ and $S=-u+D_{0}^{2}$ where $u$ is a quadratic nonresidue, we have $Cay(G,E)$ is a directed regular graph with parameters \\$(q\frac{q-1}{2},(\frac{q-1}{2})^{2},\frac{q-1}{4}\frac{q-1}{2},\frac{q-1}{4}\frac{q-3}{2},\frac{q-1}{4}\frac{q-1}{2})$.
\end{proof}

We will also make use of the following lemma.
\begin{lemma}\label{le4.4} (\cite{D}) Let $q=4f+1=x^{2}+4y^{2}$ be a prime power with $x,y \in \mathbb{Z}$ and $x \equiv 1$ (mod 4) (here, $y$ is two-valued depending on the choice of the primitive root $\alpha$ defining the cyclotomic classes). The five distinct cyclotomic numbers of order four for even $f$ are
\begin{eqnarray*}
(0,0) & = & \frac{q-11-6x}{16},    \\
(0,1) & = & (1,0) = (3,3) = \frac{q-3+2x+8y}{16}, \\
(0,2) & = & (2,0) = (2,2) = \frac{q-3+2x}{16}, \\
(0,3) & = & (3,0) = (1,1) = \frac{q-3+2x-8y}{16}, \\
\text{all others} & = & \frac{q+1-2x}{16}.
\end{eqnarray*}
\end{lemma}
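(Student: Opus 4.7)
The plan is to follow the classical route via cyclotomic periods and Jacobi sums, since this is essentially Dickson's computation. First, I would use the symmetry relations from (1)--(2) of the excerpt to reduce the 16 a~priori cyclotomic numbers $(h,k)_4$ to exactly five classes. With $e=4$ and $f$ even, (1) gives $(h,k) = (4-h, k-h)$ and (2) gives $(h,k) = (k,h)$; chasing these identities one checks that $(0,0)$ is alone, that $(0,1) = (1,0) = (3,3)$, $(0,2) = (2,0) = (2,2)$, $(0,3) = (3,0) = (1,1)$, and that all six remaining entries $(1,2), (2,1), (1,3), (3,1), (2,3), (3,2)$ coincide. This already justifies the five-row format of the table.

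Next, I would extract linear relations by summing rows and columns of the matrix $[(i,j)]$. Because $(i,j)_4 = |D_i^4 \cap (D_j^4 + 1)|$ counts $x \in D_i^4$ with $x-1 \in D_j^4$, summing over $j$ gives $|D_i^4 \setminus \{1\}|$, which equals $f-1$ for $i=0$ and $f$ otherwise. Since $f$ is even, $-1$ lies in $D_0^4$, so summing over $i$ gives $f-1$ for $j=0$ and $f$ otherwise. Combined with the equivalence classes above, this yields three linear equations in the five unknowns and in particular forces $(0,0)$ and the all-equal class $(1,2)$ to be linked by $(0,0) + 3(0,1)_{\text{class}} + \cdots = f-1$, etc.

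The remaining equations require evaluating Jacobi sums. Fix a quartic multiplicative character $\chi$ of $\mathbb{F}_q$ and form $J(\chi^a,\chi^b) = \sum_{t} \chi^a(t)\chi^b(1-t)$. Standard Fourier inversion on the character group gives
\[
(i,j)_4 \;=\; \frac{1}{16}\sum_{a,b=0}^{3}\chi^{-ai-bj}\,J(\chi^a,\chi^b),
\]
so it suffices to compute the non-degenerate Jacobi sums. The critical input is that $J(\chi,\chi) \in \mathbb{Z}[i]$ with $|J(\chi,\chi)|^2 = q$, so $J(\chi,\chi) = a + 2bi$ with $a^2 + 4b^2 = q$; normalizing $a \equiv -1 \pmod{4}$ yields $a = -x$ with $x \equiv 1 \pmod 4$, and the sign of $b = \pm y$ is determined by the choice of primitive root used to define $\chi$ (this is the ``two-valued'' ambiguity noted in the lemma). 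The sums $J(\chi,\chi^2)$ and $J(\chi^2,\chi^2)$ reduce to $\pm 1$ or to $J(\chi,\chi)$ via the Hasse--Davenport-style identities and the fact that $\chi^2$ is the quadratic character.

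The main obstacle is precisely this Jacobi sum evaluation with the correct sign normalization; everything else is linear algebra. Once $J(\chi,\chi) = -x + 2yi$ is in hand, substituting into the inversion formula and averaging over $\chi$ versus $\chi^3 = \bar\chi$ kills the imaginary parts in the four ``even'' entries (giving formulas involving $q$ and $x$ only), while the ``odd'' entries $(0,1)$ and $(0,3)$ pick up the $\pm 8y$ term; verifying that the five resulting expressions are consistent with the row/column sum constraints then both confirms the computation and pins down the normalizations $(0,0) = (q-11-6x)/16$, etc., as claimed.
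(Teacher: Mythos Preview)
The paper does not prove this lemma at all: it is quoted verbatim from the reference \cite{D} (Dickson's classical computation of quartic cyclotomic numbers) and is stated without proof, as indicated by the citation in the lemma heading. Your sketch follows the standard Dickson route---symmetry reduction via (1)--(2), row/column sum constraints, and Jacobi-sum evaluation $J(\chi,\chi)=-x+2yi$ with $|J|^2=q$---and is the correct outline of how the cited result is established in the original source, so there is nothing to compare against in this paper and nothing wrong with what you wrote beyond it being unnecessary here.
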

We first learned of the following from Feng Tao in 2015, but we are unaware of any publication.

\begin{theorem} Let $q=1+16a^{2}$ be a prime power where $a$ is an integer, $\gamma$ a primitive element and $l=\frac{q-1}{4}$. Let $H=\langle \tau\mid \tau^{l}=1,\tau x=x^{\gamma^{4}}\tau\text{ for all }x\in\mathbb{F}_{q}\rangle$, $G=\mathbb{F}_{q}\rtimes H$, and $E=SH$ where $S=-1+D_{2}^{2}\cup\{0\}$. Then $Cay(G,E)$ is a directed strongly regular graph with parameters $(q\frac{q-1}{4},(\frac{q-1}{4})^{2},(\frac{q-1}{8})^{2},(\frac{q-1}{8})^{2}-\frac{q-1}{16},(\frac{q-1}{8})^{2})$.
\end{theorem}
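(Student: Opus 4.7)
The plan is to invoke Lemma~\ref{le4.1} with the choices $u = 1$ and $D' = D_2^4$, so that $S = -1 + D_2^4$ has $|S| = (q-1)/4$, matching the value forced by the parameter $k = ls = ((q-1)/4)^2$. First I verify the hypotheses: the subgroup $D$ in Lemma~\ref{le4.1} is $\langle \gamma^4 \rangle = D_0^4$, of order $l = (q-1)/4$; since $D_2^4 = \gamma^2 D_0^4$ is a multiplicative coset of $D_0^4$, it is automatically $D$-invariant. Also $0 \in S$ would force $1 \in D_2^4$, which is false because $1 \in D_0^4$. Finally, because $q \equiv 1 \pmod{16}$, we have $-1 = \gamma^{(q-1)/2} \in D_0^4$, so $-uD = -D_0^4 = D_0^4$, and therefore $\underline{D'}\,\underline{uD}^{-1} = \underline{D_2^4}\,\underline{D_0^4}$ in $\mathbb{Z}[\mathbb{F}_q,+]$.

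The heart of the argument is to evaluate $\underline{D_2^4}\,\underline{D_0^4}$ using Lemma~\ref{le4.2} with $e = 4$, $i = 2$, $j = 0$, and $m = (q-1)/4 = 4a^2$ even. Since $j \neq i$, the constant term $a_{20}$ vanishes, and the product reduces to
\[
\underline{D_2^4}\,\underline{D_0^4} \;=\; \sum_{k=0}^{3}(-2,\,k-2)_4\,\underline{D_k^4} \;=\; (2,2)_4\underline{D_0^4} + (2,3)_4\underline{D_1^4} + (2,0)_4\underline{D_2^4} + (2,1)_4\underline{D_3^4}.
\]
The crucial observation is that the hypothesis $q = 1 + 16a^2$ pins the unique representation $q = x^2 + 4y^2$ with $x \equiv 1 \pmod{4}$ to $(x,y) = (1,\pm 2a)$, so I substitute $x = 1$ throughout Lemma~\ref{le4.4}. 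Then $(2,0)_4 = (2,2)_4 = (0,2)_4 = (q-1)/16$ from the listed formulas, and using the identity $(h,k)_4 = (4-h,k-h)_4$ one obtains $(2,3)_4 = (2,1)_4$, both of which fall into the ``all others'' class and so also equal $(q+1-2)/16 = (q-1)/16$. All four cyclotomic numbers thus collapse to the common value $(q-1)/16$, giving
\[
\underline{D_2^4}\,\underline{D_0^4} \;=\; \tfrac{q-1}{16}\sum_{k=0}^{3}\underline{D_k^4} \;=\; -\tfrac{q-1}{16}\mathbf{1} + \tfrac{q-1}{16}\,\underline{\mathbb{F}_q}.
\]

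From this I read off $\alpha = -(q-1)/16$ and $\beta = (q-1)/16$ in the notation of Lemma~\ref{le4.1}. Plugging $l = s = (q-1)/4$ into its parameter formula $(lq,\, ls,\, \beta s,\, \beta s + \alpha,\, \beta s)$ yields exactly $(q(q-1)/4,\ ((q-1)/4)^2,\ ((q-1)/8)^2,\ ((q-1)/8)^2 - (q-1)/16,\ ((q-1)/8)^2)$, completing the proof. The main obstacle is the simultaneous collapse of the four a priori distinct cyclotomic numbers $(2,0)_4$, $(2,1)_4$, $(2,2)_4$, $(2,3)_4$ to a single value; this is arranged precisely by the arithmetic constraint $q = 1 + 16a^2$, which forces $x = 1$ in $q = x^2 + 4y^2$ and thereby kills the $x$- and $y$-dependent shifts that separate the five cases of Lemma~\ref{le4.4}. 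Once this coincidence is recognized, everything else reduces to bookkeeping.
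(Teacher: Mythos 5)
Your proposal is correct and follows essentially the same route as the paper: expand $\underline{D_0^4}\,\underline{D_2^4}$ via Lemma~\ref{le4.2}, use Lemma~\ref{le4.4} with $x=1$ forced by $q=1+16a^2$ to see that $(2,0)_4=(2,1)_4=(2,2)_4=(2,3)_4=\frac{q-1}{16}$, and apply Lemma~\ref{le4.1}. Your explicit checks (that $-1\in D_0^4$, that $0\notin S$, and that $\alpha=-\frac{q-1}{16}$, $\beta=\frac{q-1}{16}$ — which is the value consistent with the stated parameters) only make the argument more careful than the paper's terse version.
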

\begin{proof} Combining Lemmas \ref{le4.2} and \ref{le4.4} we have \[
\underline{(D_{0}^{4})^{-1}}\underline{D_{2}^{4}}=\underline{D_{0}^{4}}\underline{D_{2}^{4}}=(2,0)_{4}\underline{D_{0}^{4}}+
(2,1)_{4}\underline{D_{1}^{4}}+(2,2)_{4}\underline{D_{2}^{4}}+(2,3)_{4}\underline{D_{3}^{4}} \] where all of the cyclotomic numbers are equal. Thus, by Lemma \ref{le4.1} we have \[
\underline{E}^{2}=-\frac{q-1}{8}\underline{E}+\frac{q-1}{8}(\frac{q-1}{4})\underline{G},\] and $Cay(G,E)$ is a directed strongly regular graph with parameters \\$(q\frac{q-1}{4},(\frac{q-1}{4})^{2},(\frac{q-1}{8})^{2},(\frac{q-1}{8})^{2}-\frac{q-1}{16},(\frac{q-1}{8})^{2})$.
\end{proof}
We omit the proof of the following as it is similar to the others. The cyclotomic numbers of order six can be found in \cite{STO}.
\begin{theorem} Let $q=1+3a^{2}$ be a prime power such that $2$ is a cubic residue and $a$ is an integer, $\gamma$ a primitive element and $l=\frac{q-1}{6}$. Let $H=\langle \tau\mid \tau^{l}=1,\tau x=x^{\gamma^{6}}\tau\text{ for all }x\in\mathbb{F}_{q}\rangle$, $G=\mathbb{F}_{q}\rtimes H$, and $E=SH$ where $S=-1+D_{3}^{6}$. Then $Cay(G,E)$ is a directed strongly regular graph with parameters $(q\frac{q-1}{6},(\frac{q-1}{6})^{2},\frac{(q-1)^{2}}{216},\frac{(q-1)^{2}}{216}-\frac{q-1}{36},\frac{(q-1)^{2}}{216})$.
\end{theorem}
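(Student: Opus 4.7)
The strategy mirrors the proof of the preceding theorem: I would apply Lemma~\ref{le4.1} with the cyclotomic data $D=D_0^6$, $D'=D_3^6$, and $u=1$, so that $S=-1+D_3^6$ and $s=|S|=(q-1)/6=l$. First, verify the hypotheses of Lemma~\ref{le4.1}. Since $D_3^6$ is a coset of the multiplicative subgroup $D_0^6$, we have $xD_3^6=D_3^6$ for every $x\in D_0^6$, so $D'$ is $D$-invariant. Because the cosets of $D_0^6$ are disjoint and $1\in D_0^6$, we get $1\notin D_3^6$, hence $0\notin -1+D_3^6=S$, so Lemma~\ref{le4.1} applies once the required group-ring identity is established.

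Next, compute $\underline{D'}\cdot\underline{uD}^{-1}=\underline{D_3^6}\cdot\underline{D_0^6}$, using the fact that $(D_0^6)^{-1}=D_0^6$ as a set. By Lemma~\ref{le4.2} with $i=3$, $j=0$, $e=6$,
\[
\underline{D_3^6}\cdot\underline{D_0^6}=a_{30}\mathbf{1}+\sum_{k=0}^{5}(-3,k-3)_6\,\underline{D_k^6}.
\]
The assumption $q=1+3a^2$ forces $a$ to be even, so $m=(q-1)/6=a^2/2$ is even and hence $a_{30}=0$. The remaining claim is that the six cyclotomic numbers $(-3,k-3)_6$ (equivalently $(3,k)_6$ for $k=0,\ldots,5$) all share a common value $c$; granting this collapses the sum to
\[
\underline{D_3^6}\cdot\underline{D_0^6}=c\,\underline{\mathbb{F}_q^{*}}=-c\,\mathbf{1}+c\,\underline{\mathbb{F}_q}.
\]
Applying the augmentation map to pin down $c$: the left-hand side has augmentation $|D_3^6|\cdot|D_0^6|=(q-1)^2/36$, while the right-hand side gives $6c\cdot(q-1)/6=c(q-1)$, so $c=(q-1)/36$. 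Feeding $\alpha=-(q-1)/36$, $\beta=(q-1)/36$, and $s=(q-1)/6$ into Lemma~\ref{le4.1} then produces exactly the parameters in the statement.

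The main obstacle is the cyclotomic collapse $(3,0)_6=(3,1)_6=\cdots=(3,5)_6$. The symmetries $(h,k)_e=(e-h,k-h)_e$ and the $f$-even symmetry $(h,k)_e=(k,h)_e$ (valid since $f=(q-1)/6$ is even here) reduce the six numbers to three potentially distinct values $(3,0)_6,(3,1)_6,(3,2)_6$, but equating these three is a genuinely order-$6$ cyclotomic statement. It is precisely here that the hypotheses $q=1+3a^2$ and ``$2$ is a cubic residue'' come in, via the explicit order-$6$ tables in Storer~\cite{STO}; conceptually this is the order-$6$ analogue of how the parametrization $q=1+16a^2$ combined with Lemma~\ref{le4.4} produced the corresponding collapse for order~$4$ in the preceding theorem, with the cubic-residuacity of $2$ playing the role that the form $1+16a^2$ played there.
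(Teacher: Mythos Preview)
Your proposal is correct and follows exactly the approach the paper intends: the paper omits the proof entirely, saying only that it ``is similar to the others,'' and your argument is precisely that---apply Lemma~\ref{le4.1} with $D=D_0^6$, $D'=D_3^6$, $u=1$, use Lemma~\ref{le4.2} to expand $\underline{D_3^6}\,\underline{D_0^6}$, and invoke the order-$6$ cyclotomic tables in \cite{STO} (under the hypotheses $q=1+3a^2$ and $2$ a cubic residue) to obtain the collapse $(3,0)_6=\cdots=(3,5)_6$. Your verification that $m=(q-1)/6$ is even (forcing $a_{30}=0$ and the symmetry $(h,k)=(k,h)$), that $0\notin S$, and your augmentation count for $c=(q-1)/36$ are all correct and in fact more explicit than anything the paper provides.
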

\section{Constructions of Directed Strongly Regular Graphs from Block Matrices}\label{sec3}

Block matrices were used in \cite{AGOS} and \cite{DUV2} to construct directed strongly regular graphs with parameters $(2(2l+1),2l,l,l-1,l)$ where $l$ is a positive integer. In \cite{DUV1} it was shown how to construct directed strongly regular graphs with parameters $(Nq,N-1,\frac{N-1}{q},\frac{N-1}{q}-1,\frac{N-1}{q})$. In this section we use block matrices to construct directed strongly regular graphs with such parameters. Interestingly, the method requires a certain type of matrix which resembles the multiplication table of a loop.

\par
For a matrix $M=(M_{ij})$ we denote the $ij$th position by $M_{ij}$, and for a block matrix $U=\left[U_{ij}\right]$ we denote the $ij$th block by $\left[U\right]_{ij}$.
\par
We say $M$ is a $\sigma$-circulant if $M_{ij}=M_{i-k,j-\sigma k}$ for all indices $(i,j)$. Note that the sum of two $\sigma$-circulants is again a $\sigma$-circulant, and that the product of a $\sigma_{1}$-circulant with a $\sigma_{2}$-circulant is a $\sigma_{1} \sigma_{2}$-circulant.
\par
We say a matrix $M$ is {\it transpotent} if $M_{il}*M_{lj}=M_{ij}$ for all pairs of indices $(i,l)$ and $(l,j)$. Again, let $q$ be a prime power, $\mathbb{F}_{q}$ a finite field, and $e$ a divisor of $q-1$. Let $M$ be an $e \times e$ transpotent matrix over $\mathbb{F}_{q}$. We say $M$ is a {\it (q,e)-loop matrix} (or simply a {\it (q,e)-loop}) if each row and column of $M$ contains exactly one member of $D_{l}^{e}$ for each $l \in \{0,...,e-1\}$.
\par
Let $M$ be any matrix over $\mathbb{F}_{q}$. For $\sigma \in \mathbb{F}_{q}$ and $D$ any union of cyclotomic cosets of order $e$, let $C_{D}^{(q,e)}$ resp. $C_{\sigma}^{(q,e)}$ denote the matrices, with rows and columns indexed by $\mathbb{F}_{q}$, given by \[
(C_{D}^{(q,e)})_{ij}=\begin{cases} 1 \text{ if } x_{i} \in x_{j}+D, \\
                           0 \text{ otherwise, } \end{cases} \]
resp.
\[(C_{\sigma D}^{(q,e)})_{ij}=\begin{cases} 1 \text{ if } x_{i} \in \sigma x_{j}+D, \\
                           0 \text{ otherwise, } \end{cases} \]
and let $(C_{D}^{(q,e)})^{M}$ denote the block matrix given by $\left[C_{M_{ij} D}^{(q,e)}\right]$. When $D=D_{0}^{e}$ we simply write $C_{1}^{(q,e)}$ and $C_{\sigma}^{(q,e)}$ for $C_{D}^{(q,e)}$ and $C_{\sigma D}^{(q,e)}$ respectively.

\begin{lemma}\label{le2.0} Let $q$ be a prime power with $q-1=em$ for some integer $m$. Let $\sum_{i=0}^{e-1}A_{i}B_{i}$ be a sum of matrix products where all of the $A_{i}$'s and $B_{i}$'s are $0$-$1$ matrices whose rows and columns are indexed by $\mathbb{F}_{q}$, each having the same first row with $1$'s only positions corresponding to $D_{0}^{e}$. Assume that $A_{i}B_{i}$ is $\sigma$-circulant for all $i,0 \leq i \leq e-1$, and also that $B_{i}$ is $\sigma_{i}$-circulant, where $\sigma_{i} \in D_{i}^{e}$. Then, if $C$ is the $\sigma$-circulant matrix whose first row is the same as that of the $A_{i}$'s and $B_{i}$'s, $\sum_{i=0}^{e-1}A_{i}B_{i}+C=mJ$.
\end{lemma}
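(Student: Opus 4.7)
The plan is to reduce the equality of matrices to an equality of first rows, which will then follow from a short counting argument based on the cyclotomic partition of $\mathbb{F}_q^*$.

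\textbf{Step 1: Reduction to first rows.} Since $mJ$ is trivially $\sigma$-circulant, and since each $A_i B_i$ and $C$ are $\sigma$-circulants, their sum $\sum_{i=0}^{e-1} A_i B_i + C$ is also a $\sigma$-circulant (by the remark that the sum of two $\sigma$-circulants is a $\sigma$-circulant). Because a $\sigma$-circulant matrix is determined entirely by its first row via $M_{xy} = M_{0,\, y - \sigma x}$, it therefore suffices to verify that the row indexed by $0 \in \mathbb{F}_q$ of $\sum_i A_i B_i + C$ is the constant row $m$.

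\textbf{Step 2: Computing the first row.} For each $i$, the first row of $A_i$ is the indicator function of $D_0^e$, so
\[
(A_i B_i)_{0,y} = \sum_{z \in D_0^e} (B_i)_{z,y}.
\]
Using that $B_i$ is $\sigma_i$-circulant with first row the indicator of $D_0^e$, we get $(B_i)_{z,y} = 1$ iff $y - \sigma_i z \in D_0^e$. Hence
\[
(A_i B_i)_{0,y} \;=\; \bigl|\{\, z \in D_0^e \;:\; y - \sigma_i z \in D_0^e \,\}\bigr|.
\]
Substituting $w = \sigma_i z$ and noting $\sigma_i \in D_i^e$ implies $\sigma_i D_0^e = D_i^e$, this count equals $|\{w \in D_i^e : y - w \in D_0^e\}|$.

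\textbf{Step 3: Summing and invoking the cyclotomic partition.} Since $D_0^e, D_1^e, \ldots, D_{e-1}^e$ partition $\mathbb{F}_q^*$, summing over $i$ gives
\[
\sum_{i=0}^{e-1} (A_i B_i)_{0,y} \;=\; \bigl|\{\, w \in \mathbb{F}_q^* \;:\; y - w \in D_0^e \,\}\bigr|.
\]
The set $y - D_0^e$ has exactly $|D_0^e| = m$ elements; it contains $w = 0$ precisely when $y \in D_0^e$. Thus the above sum equals $m - [\,y \in D_0^e\,]$. Adding $C_{0,y} = [\,y \in D_0^e\,]$ yields the value $m$ uniformly in $y$. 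Combined with Step 1, the identity $\sum_i A_i B_i + C = mJ$ follows.

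There is no real obstacle in the argument; everything is linear bookkeeping once one observes that the key reduction is the circulant structure and that $\sigma_i D_0^e = D_i^e$ realigns each factor onto the same cyclotomic class. The only subtle point worth pausing over is verifying that $B_i$ being $\sigma_i$-circulant with its prescribed first row gives the clean formula $(B_i)_{z,y} = [y - \sigma_i z \in D_0^e]$, which is a direct unwinding of the $\sigma$-circulant condition $M_{x,y} = M_{0, y - \sigma x}$.
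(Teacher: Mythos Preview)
Your proof is correct and follows essentially the same route as the paper: reduce to the first row via the $\sigma$-circulant structure, interpret $(A_iB_i)_{0,y}$ as the count $|\{x\in D_0^e : y-x\in D_i^e\}|$ (equivalently your $|\{w\in D_i^e : y-w\in D_0^e\}|$), and then use that the $D_i^e$ partition $\mathbb{F}_q^*$ together with $C_{0,y}=[y\in D_0^e]$ to obtain the constant value $m$. The only difference is that you spell out the partition-and-indicator bookkeeping that the paper compresses into ``it is now easy to see.''
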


\begin{proof} Since $\sum_{i=0}^{e-1}A_{i}B_{i}+C$ is a $\sigma$-circulant, we need only show that the first row is $(m,...,m)$. First notice that, for fixed $y$, we have $(A_{i}B_{i})_{0,y}$ is equal to the number of $x \in D_{0}^{e}$ such that $y \in x+\sigma_{i}D_{0}^{e}=x+D_{i}^{e}$ for $i,0\leq i \leq e-1$, and $(C)_{0,y}$ is equal to $1$ if $y \in D_{0}^{e}$ and equal to $0$ if $y \not\in D_{0}^{e}$. It is now easy to see that $\sum_{i=0}^{e-1}\left[(A_{i}B_{i})_{0,y}+(C)_{0,y}\right]=m$.
\end{proof}

\begin{theorem}\label{th2.1} Let $q$ be a prime power with $q-1=em$ for some integer $m$, and $M$ a $(q,e)$-loop. Then the block matrix $(C_{1}^{(q,e)})^{M}$ is the adjacency matrix of a directed strongly regular graph with parameters $(e(em +1),em,m,m-1,m)$.
\end{theorem}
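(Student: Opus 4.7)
The plan is to verify the two defining equations of a directed strongly regular graph directly for $A := (C_{1}^{(q,e)})^{M}$. Inserting $(t,\lambda,\mu) = (m, m-1, m)$ into the DSRG identity $A^{2} = tI + \lambda A + \mu(J-I-A)$ collapses it to the single clean identity $A^{2} + A = mJ$, so I need only establish (i) $AJ = JA = em\,J$ and (ii) $A^{2} + A = mJ$.

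The valency (i) is immediate: each $(i,j)$-block $C_{M_{ij} D_{0}^{e}}^{(q,e)}$ has constant row sum and column sum $|D_{0}^{e}| = m$, because multiplication by the nonzero scalar $M_{ij}$ merely permutes $D_{0}^{e}$. Summing over the $e$ blocks in any block-row or block-column of $A$ then yields $em$.

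For (ii), I would compute
\[
[A^{2}]_{(i,x),(j,y)} \;=\; \sum_{k=0}^{e-1}\bigl(C_{M_{ik} D_{0}^{e}}^{(q,e)} \, C_{M_{kj} D_{0}^{e}}^{(q,e)}\bigr)_{xy}.
\]
The $(x,y)$-entry of $C_{\sigma_{1} D_{0}^{e}}^{(q,e)}\, C_{\sigma_{2} D_{0}^{e}}^{(q,e)}$ counts $z \in \mathbb{F}_{q}$ with $x - \sigma_{1} z \in D_{0}^{e}$ and $z - \sigma_{2} y \in D_{0}^{e}$. The substitution $a := x - \sigma_{1} z$, $a' := \sigma_{1}(z - \sigma_{2} y)$ is a bijection from such $z$ onto pairs $(a, a') \in D_{0}^{e} \times \sigma_{1} D_{0}^{e}$ satisfying $a + a' = x - \sigma_{1} \sigma_{2} y$. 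Here the transpotent hypothesis $M_{ik} M_{kj} = M_{ij}$ replaces the right-hand side by the $k$-independent target $c := x - M_{ij} y$. Moreover, the $(q,e)$-loop condition says that as $k$ runs over $\{0,1,\dots,e-1\}$, $M_{ik}$ visits each of $D_{0}^{e}, D_{1}^{e}, \dots, D_{e-1}^{e}$ exactly once, so the corresponding cosets $M_{ik} D_{0}^{e}$ together tile $\mathbb{F}_{q}^{*}$. Therefore
\[
[A^{2}]_{(i,x),(j,y)} \;=\; \bigl|\{(a, a') \in D_{0}^{e} \times \mathbb{F}_{q}^{*} : a + a' = c\}\bigr| \;=\; m - \mathbf{1}[c \in D_{0}^{e}],
\]
where the $-1$ accounts for the forbidden choice $a = c$ (which would force $a' = 0$). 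Since $[A]_{(i,x),(j,y)} = \mathbf{1}[c \in D_{0}^{e}]$, adding the two gives $[A^{2} + A]_{(i,x),(j,y)} = m$ for every index pair, so $A^{2} + A = mJ$.

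The only real conceptual content lies in noticing how the two structural hypotheses on $M$ conspire: the transpotent identity kills the $k$-dependence of the ``target'' $c$, while the $(q,e)$-loop property converts the sum over $k$ into a tiling of $\mathbb{F}_{q}^{*}$ by cosets of $D_{0}^{e}$. Once these two observations are in place, the verification is elementary counting. Lemma \ref{le2.0} packages essentially the same summation, so one could alternatively apply it after observing that each product $C_{M_{ik} D_{0}^{e}}^{(q,e)}\, C_{M_{kj} D_{0}^{e}}^{(q,e)}$ is $M_{ij}^{-1}$-circulant with circulant factors indexed by the cosets of $M_{ik}$; either route reaches the desired identity.
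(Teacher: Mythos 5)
Your proof is correct and follows essentially the same route as the paper: the reduction to $A^{2}+A=mJ$, the use of transpotency to make the target $x-M_{ij}y$ independent of the summation index, and the use of the $(q,e)$-loop condition to tile $\mathbb{F}_{q}^{*}$ by the cosets $M_{ik}D_{0}^{e}$ are exactly the ingredients of the paper's argument. The only difference is cosmetic: the paper packages the final count into Lemma \ref{le2.0} (checking just the first row of a circulant), whereas you carry out the same count at an arbitrary entry via the change of variables $(a,a')=(x-\sigma_{1}z,\ \sigma_{1}(z-\sigma_{2}y))$, which makes the verification self-contained.
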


\begin{proof} We have $\left[C^{2}+C\right]_{ij}=\sum_{l=1}^{e}C_{M_{il}}C_{M_{lj}}+C_{M_{ij}}$. Note that all of the $C_{ij}$'s have the same first row with $1$'s only in positions corresponding to $D_{0}^{e}$. Since $M$ is transpotent, we have that $C_{M_{il}}C_{M_{lj}}$ is $M_{ij}$-circulant for all $l,1 \leq l \leq e$. Let $D_{b_{i}^{j}}^{e}$ be the cyclotomic coset containing $M_{ij}$. Then, since $M$ is a $(q,e)$-loop, we have that, in the summation $\sum_{l=1}^{e}C_{M_{il}}C_{M_{lj}}$, $b_{l}^{j}$ runs over $\{0,...,e-1\}$ as $l$ runs over $\{1,...,e\}$. Now, with possible relabeling of indices, we can see that $\sum_{l=1}^{e}C_{M_{il}}C_{M_{lj}}+C_{M_{ij}}$ satisfies the conditions of Lemma \ref{le2.0}, hence $\left[C^{2}+C\right]_{ij}=mJ$ for all indices $(i,j)$, and so $C^{2}+C=mJ$.
\end{proof}
We have the following corollaries.
\begin{corollary} Let $q$ be a prime power with $q=em+1$ and $q \equiv 3 ($mod $4)$. Suppose $a,b \in D_{1}^{2}$ are such that $ab=1$. Let \[ M=\left[ \arraycolsep=3.0pt\def\arraystretch{1.0} \begin{array}{cc}
1 & a \\
b & 1 \end{array} \right]. \]  Then $(C_{1}^{(q,2)})^{M}$ is the adjacency matrix of a directed strongly regular graph with parameters \\ $(2(2m+1),2m,m,m-1,m)$.
\end{corollary}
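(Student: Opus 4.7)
The plan is to apply Theorem \ref{th2.1} directly with $e = 2$: that theorem produces a directed strongly regular graph with parameters $(e(em+1), em, m, m-1, m)$, which for $e = 2$ specializes to exactly $(2(2m+1), 2m, m, m-1, m)$, as claimed. Thus the corollary follows the moment we check that the given matrix
\[ M = \left[ \begin{array}{cc} 1 & a \\ b & 1 \end{array} \right] \]
is a $(q, 2)$-loop, after which a direct appeal to Theorem \ref{th2.1} identifies $(C_{1}^{(q,2)})^{M}$ as the desired adjacency matrix.

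First I would verify the cyclotomic coset property, which requires that each row and each column of $M$ contain exactly one member of $D_{0}^{2}$ and one of $D_{1}^{2}$. Since $1 \in D_{0}^{2}$ and the hypothesis places $a, b$ in $D_{1}^{2}$, the two rows $(1, a)$ and $(b, 1)$, as well as the two columns $(1, b)^{T}$ and $(a, 1)^{T}$, each contain precisely one quadratic residue and one non-residue, as required.

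Next I would verify transpotence, namely $M_{il} \cdot M_{lj} = M_{ij}$ for every triple $(i, l, j) \in \{1, 2\}^{3}$. Six of the eight triples have $M_{il} = 1$ or $M_{lj} = 1$ and are therefore immediate. The remaining two nontrivial cases are
\[ M_{12} M_{21} = ab = 1 = M_{11} \qquad \text{and} \qquad M_{21} M_{12} = ba = 1 = M_{22}, \]
both of which are exactly the hypothesis $ab = 1$.

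Once these two properties are confirmed, $M$ qualifies as a $(q, 2)$-loop and Theorem \ref{th2.1} closes the proof. I do not anticipate any real obstacle; the entire argument is a brief verification followed by invocation of the preceding theorem. The sole role of the hypothesis $q \equiv 3 \pmod{4}$ is to guarantee that suitable $a, b$ actually exist: since $-1$ is then a non-residue, the simplest choice $a = b = -1$ already satisfies $ab = 1$ with both elements in $D_{1}^{2}$.
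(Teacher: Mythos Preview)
Your proposal is correct and follows exactly the paper's approach: the paper's proof simply states that $M$ being a $(q,2)$-loop is trivial and then invokes Theorem~\ref{th2.1}, while you spell out the (indeed trivial) verification of the coset and transpotence conditions explicitly. One minor remark: your aside about the role of $q\equiv 3\pmod 4$ is not quite right, since for any odd $q$ the inverse of a quadratic non-residue is again a non-residue, so suitable $a,b$ always exist; the hypothesis appears to be superfluous in the corollary as stated.
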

\begin{proof} That $M$ is a $(q,2)$-loop is trivial. The result then follows immediately from Lemma \ref{th2.1}.
\end{proof}

\begin{corollary} Let $q$ be a prime power with $q=3m+1$. Suppose $a \in D_{i}^{2}$ for $i=1$ or $2$. Let \[ M=\left[ \arraycolsep=3.0pt\def\arraystretch{1.0} \begin{array}{ccc}
1            & a^{2} & a      \\
(a^{2})^{-1} & 1     & a^{-1} \\
 a^{-1}      & a     & 1      \end{array} \right]. \]  Then $(C_{1}^{(q,3)})^{M}$ is the adjacency matrix of a directed strongly regular graph with parameters \\ $(3(3m+1),3m,m,m-1,m)$.
\end{corollary}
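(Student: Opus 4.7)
The plan is to reduce the claim to Theorem~\ref{th2.1}: once $M$ is shown to be a $(q,3)$-loop, the conclusion about $(C_1^{(q,3)})^M$ follows by setting $e=3$ (and using $q-1=3m$). So the verification splits into two independent parts: (a) that $M$ is transpotent, i.e.\ $M_{il}\cdot M_{lj}=M_{ij}$ for every $(i,l,j)\in\{1,2,3\}^3$; and (b) that each row and each column of $M$ contains exactly one representative from each cyclotomic class $D_0^3,D_1^3,D_2^3$.

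For (a), I would observe that every entry of $M$ is a power of $a$ (with the convention $a^0=1$), so the condition $M_{il}\cdot M_{lj}=M_{ij}$ reduces to an additive identity on integer exponents. Writing out the exponent matrix
\[
E=\begin{pmatrix} 0 & 2 & 1 \\ -2 & 0 & -1 \\ -1 & 1 & 0 \end{pmatrix},
\]
one sees immediately that $E_{ij}=c_j-c_i$ with $(c_1,c_2,c_3)=(0,2,1)$, and this coboundary form gives $E_{il}+E_{lj}=(c_l-c_i)+(c_j-c_l)=E_{ij}$ automatically across all $27$ triples. Hence transpotence is immediate; no case analysis is really needed.

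For (b), since $q-1=3m$, the class of any power $a^k$ in the partition $\mathbb{F}_q^*=D_0^3\sqcup D_1^3\sqcup D_2^3$ is determined by $k\bmod 3$ together with the class of $a$. I read the hypothesis as requiring $a$ to be a non-cube (if $a$ were a cube then every entry of $M$ would land in $D_0^3$ and (b) would fail; the intent is evidently ``$a\in D_i^3$ for $i=1$ or $2$''). Under that assumption, $\{a,a^{-2}\}$ share one of $D_1^3,D_2^3$ while $\{a^2,a^{-1}\}$ share the other, and together with $1\in D_0^3$ a pass over the three rows $(1,a^2,a)$, $(a^{-2},1,a^{-1})$, $(a^{-1},a,1)$ and the analogous three columns confirms that all three classes appear exactly once in each row and each column.

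Combining (a) and (b) shows $M$ is a $(q,3)$-loop, and Theorem~\ref{th2.1} then delivers $(C_1^{(q,3)})^M$ as the adjacency matrix of a directed strongly regular graph with parameters $(3(3m+1),3m,m,m-1,m)$. The whole argument is routine; the only mild obstacle is the interpretive point about the hypothesis — one must read it so that $a$ is a non-cube, since the balance of $M$ across the three cyclotomic classes of order~$3$ depends squarely on that condition.
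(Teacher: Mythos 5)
Your proof is correct and takes essentially the same route as the paper: both reduce the claim to verifying that $M$ is a $(q,3)$-loop and then invoke Theorem~\ref{th2.1}, and your reading of the hypothesis as $a\in D_i^3$ (a typo in the statement) matches the paper's own proof, which assumes without loss of generality that $a\in D_1^3$. Your coboundary formulation $E_{ij}=c_j-c_i$ of the exponent matrix is a tidy way to package the transpotency check that the paper leaves largely implicit, but it is the same verification.
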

\begin{proof} We need only show that $M$ is a $(q,3)$-loop. The result then follows from Lemma \ref{th2.1}. Assume, without loss of generality, that $a \in D_{1}^{3}$. Clearly $a^{2} \in D_{2}^{3}$. Then we must have $(a^{2})^{-1} \in D_{1}^{3}$ and $a^{-1}a^{-1}=(a^{2})^{-1}$.
\end{proof}
\begin{corollary} Let $q$ be a prime power with $q=4m+1$ and $-1$ not a quartic residue. Let $a$ and $b$ be the two distinct square roots of $-1$. Let \[ M=\left[ \arraycolsep=3.0pt\def\arraystretch{1.0} \begin{array}{cccc}
1  & -1 & a & b  \\
-1 & 1  & b & a  \\
b  & a  & 1 & -1 \\
a & b & -1 & 1    \end{array} \right]. \] Then $(C_{1}^{(q,4)})^{M}$ is the adjacency matrix of a directed strongly regular graph with parameters \\ $(4(4m+1),4m,m,m-1,m)$.
\end{corollary}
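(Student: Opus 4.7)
The plan is to reduce to Theorem~\ref{th2.1}: once $M$ is shown to be a $(q,4)$-loop, the theorem immediately delivers a directed strongly regular graph with the stated parameters (here $e=4$ and $q-1=4m$). So the work is to verify the two defining properties of a $(q,4)$-loop, namely transpotency and the cyclotomic-class covering of rows and columns.

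Transpotency is the easy part. The entries $\{1,-1,a,b\}$ form the cyclic subgroup of order $4$ inside $\mathbb{F}_q^*$ generated by $a$ (with $a^2=-1$ and $b=-a=a^{-1}$, so $ab=1$ and $b^2=-1$). A quick inspection suggests $M$ is the ``ratio table'' of this group under the labelling $f(1)=1,\,f(2)=-1,\,f(3)=a,\,f(4)=b$, i.e.\ $M_{ij}=f(i)^{-1}f(j)$. Once that identity is confirmed entry by entry, transpotency $M_{il}\,M_{lj}=M_{ij}$ falls out by cancellation of $f(l)^{-1}f(l)$, with no case analysis required.

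The covering condition is where the hypothesis ``$-1$ is not a quartic residue'' is essential, and I expect this to be the only step demanding any real thought. The quotient $\mathbb{F}_q^*/D_0^4$ is cyclic of order $4$; since $-1\notin D_0^4$ by hypothesis but $(-1)^2=1\in D_0^4$, $-1$ must land in the unique order-$2$ coset, namely $D_2^4$. Then $a^2=-1\in D_2^4$ forces $a\in D_1^4\cup D_3^4$, and $b=(-1)\cdot a$ lies in the remaining odd class. Hence $\{1,-1,a,b\}$ is a full transversal of $\mathbb{F}_q^*/D_0^4$, and every row (resp.\ column) of $M$, being this transversal multiplied by the fixed field element $f(i)^{-1}$ (resp.\ $f(j)$), stays a transversal. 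With both properties in hand, Theorem~\ref{th2.1} closes out the proof.
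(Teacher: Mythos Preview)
Your proposal is correct and follows the same strategy as the paper: reduce to Theorem~\ref{th2.1} by verifying that $M$ is a $(q,4)$-loop. The paper's own proof is terser---it simply asserts, without loss of generality, that $a\in D_{1}^{4}$, $b\in D_{3}^{4}$, and records the identities $a^{2}=b^{2}=-1$, $a=-b$, $ab=1$---whereas you supply the underlying reasons (the ratio-table description $M_{ij}=f(i)^{-1}f(j)$ for transpotency, and the quotient argument pinning $-1$ to $D_{2}^{4}$ for the transversal property), but the approach is the same.
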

\begin{proof} We need only show that $M$ is a $(q,4)$-loop, and the result will then follow from Lemma \ref{th2.1}. Assume, without loss of generality, that $a \in D_{1}^{4}$ and $b \in D_{3}^{4}$. We have $a^{2}=b^{2}=-1$ with $a=-b$ and $ab=1$.
\end{proof}

\begin{corollary} Let $q$ be a prime power and $e$ and integer with $q=em+1$. Let $a_{0}=1,a_{1},...,a_{e-1}$ be of representatives of the cyclotomic cosets $D_{0}^{e},D_{1}^{e},...,D_{e-1}^{e}$ respectively. Let \[
M=\left[ \arraycolsep=3.0pt\def\arraystretch{1.3} \begin{array}{ccccccc}
1           & a_{1}            & a_{2}           & a_{3}            &  \cdots & a_{e-2}         & a_{e-1}  \\
a_{1}^{-1}  & 1                & a_{1}^{-1}a_{2} & a_{1}^{-1}a_{3}  & \cdots & a_{1}^{-1}a_{e-2}& a_{1}^{-1}a_{e-1}  \\
a_{2}^{-1}  & a_{2}^{-1}a_{1}  & 1               & a_{2}^{-1}a_{3}  & \cdots & a_{2}^{-1}a_{e-2}& a_{2}^{-1}a_{e-1} \\
a_{3}^{-1}  & a_{3}^{-1}a_{1}  & a_{3}^{-1}a_{2} & 1                & \cdots & a_{3}^{-1}a_{e-2}& a_{3}^{-1}a_{e-1} \\
\vdots      & \vdots           & \vdots          & \vdots           &        & \vdots           & \vdots            \\
a_{e-2}^{-1}& a_{e-2}^{-1}a_{1}& a_{e-2}^{-1}a_{2}& a_{e-2}^{-1}a_{3}& \cdots& 1                & a_{e-2}^{-1}a_{e-1} \\
a_{e-1}^{-1}&a_{e-1}^{-1}a_{1} &a_{e-1}^{-1}a_{2}& a_{e-1}^{-1}a_{3} &  \cdots & a_{e-1}^{-1}a_{e-2}&1   \end{array} \right]. \] Then $(C_{1}^{(q,e)})^{M}$ is the adjacency matrix of a directed strongly regular graph with parameters \\ $(e(em+1),em,m,m-1,m)$.
\end{corollary}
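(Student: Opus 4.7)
The plan is to reduce everything to Theorem~\ref{th2.1} by checking that the displayed matrix $M$ is a $(q,e)$-loop. Once that is established, the parameters $(e(em+1),em,m,m-1,m)$ pop out of Theorem~\ref{th2.1} without further work, so the only content left is the verification of the two defining properties of a $(q,e)$-loop matrix, namely transpotence and the coset-distribution condition on rows and columns.

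First I would observe that the entries of $M$ are given by the uniform formula $M_{ij}=a_i^{-1}a_j$ (reading $a_0=1$ on the diagonal, which is consistent with the stated matrix). Transpotence is then immediate: for any triple of indices $(i,l,j)$,
\[
M_{il}\,M_{lj}=(a_i^{-1}a_l)(a_l^{-1}a_j)=a_i^{-1}a_j=M_{ij},
\]
so the condition $M_{il}*M_{lj}=M_{ij}$ required by the definition of a transpotent matrix is satisfied.

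Next I would check the cyclotomic-coset condition. Since $a_j\in D_j^e$, we have $a_i^{-1}a_j\in D_{j-i\bmod e}^e$ (using that $D_i^e\cdot D_j^e=D_{i+j\bmod e}^e$ and $(D_i^e)^{-1}=D_{-i\bmod e}^e$). In the $i$-th row the map $j\mapsto j-i\pmod e$ is a bijection of $\{0,1,\ldots,e-1\}$ onto itself, so the row contains exactly one representative from each coset $D_0^e,\ldots,D_{e-1}^e$. The same argument applied with $j$ fixed and $i$ varying handles the columns. Hence $M$ is a $(q,e)$-loop.

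The main obstacle, if any, is purely notational: one must be careful that the indices $i$ in ``the $i$-th row'' line up with the subscript of $a_i$ and that the bijection $j\mapsto j-i\bmod e$ is invoked correctly. Having verified both properties, Theorem~\ref{th2.1} applies directly, and $(C_1^{(q,e)})^M$ is the adjacency matrix of a directed strongly regular graph with the claimed parameters $(e(em+1),em,m,m-1,m)$.
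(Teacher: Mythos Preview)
Your proposal is correct and follows essentially the same approach as the paper: both write the entries uniformly as $M_{ij}=a_{i}^{-1}a_{j}$ (up to an index shift), verify transpotence by the telescoping product $a_i^{-1}a_l\cdot a_l^{-1}a_j=a_i^{-1}a_j$, check that each row and column hits every coset exactly once, and then invoke Theorem~\ref{th2.1}. Your argument for the coset-distribution condition is in fact a bit more explicit than the paper's, since you identify the coset of $M_{ij}$ as $D_{j-i\bmod e}^e$ rather than just asserting that the index ``runs over'' $\{0,\ldots,e-1\}$.
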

\begin{proof} We need only show that $M$ is a $(q,e)$-loop, and the result will then follow from Lemma \ref{th2.1}. We first show that each row, and each column, of $M$ contains exactly one representative of each $D_{l}^{e},0\leq l\leq e-1$. The entry $M_{ij}$ of $M$ is given by $a_{i-1}^{-1}a_{j-1}$. As $j$ runs over $\{1,...,e\}$, $l$ runs over $\{0,...,e-1\}$ where $D_{l}^{e}$ is the cyclotomic coset containing $a_{j-1}$. Thus, as $j$ runs over $\{1,...,e\}$, $l$ runs over $\{0,...,e-1\}$ where $D_{l}^{e}$ is the cyclotomic coset containing $a_{i-1}^{-1}a_{j-1}$ (with $i$ is fixed). Similarly, as $i$ runs over $\{1,...,e\}$, $l$ runs over $\{0,...,e-1\}$ where $D_{l}^{e}$ is the cyclotomic coset containing $a_{i-1}^{-1}a_{j-1}$ (with $j$ fixed).
\par
To see that $M$ is transpotent, simply note that \begin{eqnarray*} M_{il}M_{lj}  & = & a_{i-1}^{-1}a_{l-1}a_{l-1}^{-1}a_{j-1} \\
                                                                                 & = & a_{i-1}^{-1}a_{j-1} \\
                                                                                 & = & M_{ij} \end{eqnarray*} for any indices $(i,l)$ and $(l,j)$. This completes the proof.
\end{proof}
\begin{example} Consider the $(7,2)$-loop given by \[ M=\left[
\arraycolsep=3.0pt\def\arraystretch{1.0} \begin{array}{cc}
1 & 3 \\
5 & 1 \end{array} \right]. \] Then $(C_{1}^{(7,2)})^{M}$ is given by \[ \left[
\arraycolsep=4.0pt\def\arraystretch{1.0} \begin{array}{ccccccc|ccccccc}
0&1&1&0&1&0&0&0&1&1&0&1&0&0 \\
0&0&1&1&0&1&0&1&0&0&0&1&1&0 \\
0&0&0&1&1&0&1&1&1&0&1&0&0&0 \\
1&0&0&0&1&1&0&0&0&0&1&1&0&1 \\
0&1&0&0&0&1&1&1&0&1&0&0&0&1 \\
1&0&1&0&0&0&1&0&0&1&1&0&1&0 \\
1&1&0&1&0&0&0&0&1&0&0&0&1&1 \\ \hline
0&1&1&0&1&0&0&0&1&1&0&1&0&0 \\
1&0&1&0&0&0&1&0&0&1&1&0&1&0 \\
1&0&0&0&1&1&0&0&0&0&1&1&0&1 \\
0&0&1&1&0&1&0&1&0&0&0&1&1&0 \\
1&1&0&1&0&0&0&0&1&0&0&0&1&1 \\
0&1&0&0&0&1&1&1&0&1&0&0&0&1 \\
0&0&0&1&1&0&1&1&1&0&1&0&0&0 \end{array} \right], \]
and is the incidence matrix for the well known directed strongly regular graph with parameters $(14,6,3,2,3)$.
\end{example}

\begin{example} Consider the $(7,3)$-loop given by \[ M=\left[
\arraycolsep=4.0pt\def\arraystretch{1.0} \begin{array}{ccc}
1 & 2 & 3 \\
4 & 1 & 5 \\
5 & 3 & 1  \end{array} \right]. \]Then $(C_{1}^{(7,3)})^{M}$ gives the incidence matrix for a directed strongly regular graph with parameters $(21,6,3,2,3)$.
\end{example}

\begin{example}\label{ex2.0} Consider the $(5,4)$-loop given by \[ M=\left[
\arraycolsep=4.0pt\def\arraystretch{1.0} \begin{array}{cccc}
1 & 4 & 2 & 3 \\
4 & 1 & 3 & 2 \\
3 & 2 & 1 & 4 \\
2 & 3 & 4 & 1  \end{array} \right]. \] Then $(C_{1}^{(5,4)})^{M}$ gives the incidence matrix for a directed strongly regular graph with parameters $(20,4,1,0,1)$.
\end{example}

\begin{remark} The $(5,4)$-loop in Example \ref{ex2.0} is simply the multiplication table for the loop $\mathbb{F}_{5}^{*}$.
\end{remark}

\section{Concluding Remarks}\label{sec6}
We have discussed three new methods for constructing directed strongly regular graphs. The explicit method discussed in Section 3 gives an abundance of infinite families with new parameters. It was previously unknown how to obtain directed strongly regular graphs with such parameters. The simplicity of the method discussed in Section 3 makes one wonder what other modifications could be introduced to obtain even more new parameter sets. The method discussed in Section 4 shows that directed strongly regular graphs with the same parameters as those discussed in \cite{OS} can be constructed as Cayley graphs of certain non-Abelian groups. We believe it should be possible to obtain directed strongly regular graphs with new parameters using the method of Section 4, but to find groups and their respective subsets which make this possible has thus far proven difficult. The authors included Section 5 because of the interesting properties the required matrices posses, and also simply because it may sometimes prove convenient to have a block matrix construction for directed strongly regular graphs with such parameters.

\bibliographystyle{plain}
\bibliography{myref2}

\end{document}